\documentclass[reqno]{amsart}
\usepackage{mathrsfs}
\usepackage{amssymb}
\usepackage{amscd}
\theoremstyle{plain}
\newtheorem{theorem}{Theorem}[section]
\newtheorem{lemma}{Lemma}[section]
\newtheorem{proposition}{Proposition}[section]
\newtheorem{corollary}{Corollary}[section]

\newtheorem{conjecture}{Conjecture}[section]

\begin{document}
\title{Images of linear polynomials on upper triangular matrix algebras}
\thanks{$^*$ Corresponding author}
\keywords{Lvov-Kaplansky conjecture, Fagundes-Mello conjecture, linear polynomial, multilinear polynomial, upper triangular matrix algebra}
\subjclass[2010]{16S50, 15A54}
\maketitle
\begin{center}
Yingyu Luo\\
College of Mathematics, Changchun Normal University, Changchun
130032, China\\
E-mail: luoyingyu1980@163.com\\
Qian Chen$^*$\\
Department of Mathematics, Shanghai Normal University,
Shanghai 200234, China.\\
Email address: qianchen0505@163.com
\end{center}
\maketitle

\begin{abstract}
The Fagundes-Mello conjecture asserts that every multilinear polynomial on upper triangular matrix algebras is a vector space, which is an improtant variation of the old and famous Lvov-Kaplansky conjecture. The goal of the paper is to give a description of the images of linear polynomials with zero constant term on the upper triangular matrix algebra under a mild condition on the ground field. As a consequence we improve all results on the Fagundes-Mello conjecture. As another consequence we improve some results by Fagundes and Koshlukov on the images of multilinear graded polynomials on upper triangular matrix algebras.
\end{abstract}

\section{Introduction}
Let $n\geq 2$ be an integer. Let $K$ be a field and let $K\langle X\rangle$ be the free associative algebra over $K$, freely generated by the countable set $X=\{x_1,x_2,\ldots\}$ of
noncommtative variables. We refer to the elements of $K\langle X\rangle$ as
polynomials.

Images of polynomials evaluated on algebras play an important role in noncommutative
algebra. In particular, various challenging problems related to the theory of polynomial identities
have been settled after the construction of central polynomials on matrix algebras by Formanek \cite{Formanek} and Razmyslov \cite{Razmyslov}.

The old and famous Lvov-Kaplansky conjecture asserts:

\begin{conjecture}\cite{Do}\label{Con1}
Let $f$ be a multilinear polynomial. Then the set of values
of $f$ on the matrix algebra $M_n(K)$ over a field $K$ is a vector space.
\end{conjecture}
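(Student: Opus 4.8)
The plan is to reduce the problem to a density-plus-closure statement, following the route that settles the case $n=2$. Write $S=f(M_n(K))$ for the image and let $m$ be the number of variables occurring in $f$. Since $f$ is multilinear, $S$ is a cone: $\lambda S\subseteq S$ for every $\lambda\in K$. Since $f$ has zero constant term and its variables are noncommuting, $g\,f(a_1,\dots,a_m)\,g^{-1}=f(ga_1g^{-1},\dots,ga_mg^{-1})$ for all $g\in \mathrm{GL}_n(K)$, so $S$ is invariant under conjugation. Consequently the $K$-linear span $V=\mathrm{span}_K S$ is a $\mathrm{GL}_n(K)$-submodule of $M_n(K)$; assuming $K$ is, say, algebraically closed with $\mathrm{char}\,K\nmid n$ (this is where a ``mild condition on the ground field'' enters), the conjugation module $M_n(K)$ decomposes as $KI\oplus \mathfrak{sl}_n(K)$ with both summands irreducible and non-isomorphic, so the only submodules are $0$, $KI$, $\mathfrak{sl}_n(K)$ and $M_n(K)$. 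Thus the four candidate answers in the conjecture are exactly the possible spans, and it remains to prove $S=V$ in each case.

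The cases $V=0$ and $V=KI$ are immediate: in the latter $S\subseteq KI$ is a nonzero cone, hence contains some $cI\neq 0$ and therefore all of $KI$. So the content lies in the cases $V=\mathfrak{sl}_n(K)$ and $V=M_n(K)$. The first step there is a statement about the Zariski closure: since $S$ is the image of the irreducible affine variety $M_n(K)^m$ under a polynomial map, it is constructible and $\overline{S}$ is irreducible; one then argues (this is the content of the Kanel-Belov--Malev--Rowen type analysis of conjugation-invariant cones that are not polynomial identities) that $\overline{S}=V$, i.e. $S$ is Zariski dense in $V$ and contains a dense open subset of it. The remaining and decisive task is to upgrade ``$S$ is dense in $V$'' to ``$S=V$'': one must exhibit, for every matrix of $V$ whose characteristic polynomial has repeated roots — in particular every nilpotent element of $V$ — an explicit substitution of matrices making it a value of $f$. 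For $n=2$ this is carried out by parametrizing the conjugacy classes in $\mathfrak{sl}_2(K)$ (resp.\ $M_2(K)$) and producing an evaluation landing in each, so that $0$, $K$, $\mathfrak{sl}_2(K)$, $M_2(K)$ are the only possibilities; for general $n$ the same scheme would produce, for each non-regular conjugacy class $C\subseteq V$, substitutions $a_i$ with $f(a_1,\dots,a_m)\in C$, typically by degenerating a generic evaluation (which lands among the regular semisimple elements) to $C$.

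The main obstacle is precisely this last step for $n\geq 3$, and it is the reason the Lvov--Kaplansky conjecture is still open. A Zariski-dense, conjugation-invariant cone in $\mathfrak{sl}_n(K)$ need not visibly be the whole space: its complement could a priori be a union of the lower-dimensional (non-regular semisimple and nilpotent) orbits, and there is no general mechanism forcing the image of a polynomial map to be closed. Removing it would require either a new surjectivity criterion for multilinear maps onto $\mathfrak{sl}_n(K)$, or substitutions flexible enough to realize every non-regular conjugacy class as a value of an \emph{arbitrary} multilinear $f$. I therefore expect that, without an additional hypothesis — a bound on $\deg f$, a restriction to $n=2$, or replacing $M_n(K)$ by a more tractable algebra such as the upper triangular matrix algebra, which is exactly the shift made in the sequel — the argument stops at the density statement and the equality $S=V$ cannot be obtained in full generality.
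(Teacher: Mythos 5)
The statement you were assigned is Conjecture 1.1, the Lvov--Kaplansky conjecture itself. The paper offers no proof of it --- it is quoted as an open problem (the authors note it is unresolved even for $n=3$) and serves only as motivation for the upper-triangular variant they actually study. So there is no proof of record to compare yours against, and the correct outcome for a ``blind proof attempt'' is exactly what you produced: a reduction of the problem to its known hard core together with an explanation of why the argument cannot be completed. Your reduction is accurate: multilinearity gives the cone property, conjugation-invariance makes the span $V$ a $\mathrm{GL}_n(K)$-submodule, and under mild hypotheses on $K$ the only candidates for $V$ are $0$, $KI$, $\mathfrak{sl}_n(K)$, $M_n(K)$; this is the standard Kanel-Belov--Malev--Rowen framework that settles $n=2$. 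You rightly refuse to claim $S=V$ for $n\geq 3$.

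One caution on your intermediate step: you present the density statement $\overline{S}=V$ as something ``one then argues,'' but for general $n$ this is not established either. A conjugation-invariant constructible cone can span $\mathfrak{sl}_n(K)$ while having Zariski closure a proper subvariety --- the nilpotent cone is exactly such a set --- so density does not follow formally from the computation of the span, and for $n\geq 4$ it is not known that the image of an arbitrary non-central multilinear polynomial even contains a matrix with $n$ distinct eigenvalues. So the obstruction sits one step earlier than where you placed it. This does not change your conclusion; it only means the honest stopping point is the submodule classification, not the density claim. Since the paper sidesteps all of this by working on $UT_n$ instead, your final paragraph correctly identifies the shift the authors make.
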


A special case on polynomials of degree two has been known for long
time (see \cite{Albert,Sho}). In 2013 Mesyan \cite{Mes} extended this result
for nonzero multilinear polynomials of degree three. In the same year, Buzinski and
Winstanley \cite{BW} extended this result for nonzero multilinear
polynomials of degree four.

In 2012 Kanel-Belov, Malev, and Rowen \cite{Rowen1} gave a complete
description of the image of semi-homogeneous polynomials on the
algebra of $2\times 2$ matrices over a quadratically closed field. As a consequence, they solved Conjecture \ref{Con1} for $n=2$. In 2016 they gave a complete description of the image of a multilinear polynomial which is trace vanishing
on $3\times 3$ matrices over a field $K$ (see \cite[Theorem 4]{Rowen2}).
We remark that Conjecture \ref{Con1} remains open for $n=3$.

In spite of many efforts, however, Conjecture \ref{Con1} seems
to be far from being resolved.  In 2020 Bre\v{s}ar \cite{Bre} presented some rough approximate versions of Conjecture \ref{Con1}, or
at least as an attempt to approach this conjecture from a different perspective.

In attempts to approach Conjecture \ref{Con1}, some
variations of it have been studied extensively. For example, the images of
multilinear polynomials of small degree on Lie Algebras \cite{An,SP}
and Jordan Algebras \cite{Ma, Malev3} have been discussed. In 2021 Malev \cite{Malev2} gave a complete description of the images of multilinear polynomials evaluated on the quaternion algebra.
In 2021 Vitas \cite{Vitas2} proved for any nonzero multilinear polynomial $p$,
that if $\mathcal{A}$ is an algebra with a surjective inner derivation, such as the Weyl algebra, then $p(\mathcal{A})=\mathcal{A}$. In 2022 Kanel-Belov, Malev, Pines, and Rowen \cite{Rowen2022} investigated the images of multilinear and semihomogeneous polynomials on the
algebra of octonions. Recently Centrone and Mello \cite{CM2023} investigated the images of graded polynomials on matrix algebras. For the most recent results on images of polynomials we recommend the survey paper \cite{survey}.

The set of all $n\times n$ upper
triangular matrices over $K$ will be denoted by $UT_n$. The set of all $n\times n$ strictly upper
triangular matrices will be denoted by $UT_n^{(0)}$. More generally, if $t\geq 0$, the set of all
upper triangular matrices whose entries $(i,j)$ are zero, for $j-i\leq t$, will be denoted by $UT_n^{(t)}$. For convenience we set $UT_n^{(-1)}=UT_n$. It is easy to check that $J=UT_n^{(0)}$ and $J^k=UT_n^{(k-1)}$ for all $k\geq 1$, where $J$ is the Jacobson radical of $UT_n$ (see \cite[Example 5.58]{Bre3}).

In 2019 Fagundes \cite{Fag} gave a complete description of the images
of multilinear polynomials on strictly upper trinagular matrix algebras over a field. In the same year, Fagundes and Mello \cite{FM} discussed the images
of multilinear polynomials of degree up to four on upper triangular matrix algebras. They proposed the following important variation of Conjecture \ref{Con1}:

\begin{conjecture}\cite[Conjecture 1]{FM}\label{Con}
The image of a multilinear polynomial over a field $K$ on $UT_n$ is always a vector space.
\end{conjecture}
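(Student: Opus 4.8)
The plan is to derive Conjecture \ref{Con} from the stronger statement that the image on $UT_n$ of \emph{any} polynomial which is linear in one of its variables and has zero constant term is a vector subspace, under a mild size condition on $K$ (for instance $|K|$ larger than the degree, or $K$ infinite); the conjecture is then immediate, since a multilinear polynomial is in particular linear in $x_1$. So fix $p(x_1,\dots,x_m)=\sum_{j}g_j(x_2,\dots,x_m)\,x_1\,h_j(x_2,\dots,x_m)$ with the $g_j,h_j$ monomials, and write $S=p(UT_n^{\,m})$ for its image on $UT_n$. Then $S$ is closed under scalar multiples (rescale $x_1$), so the whole point is closure under addition, which I would get by identifying $S$ with an explicitly described subspace.

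First, locate that subspace. Taking diagonal parts, the diagonal of $p(A_1,\dots,A_m)$ equals $\bar{A}_1\,q(\bar{A}_2,\dots,\bar{A}_m)$ where $q=\sum_j g_jh_j$ and the bar denotes the diagonal part (diagonal matrices commute); iterating the same idea superdiagonal by superdiagonal produces an integer $d=d(p)$ — read off from the monomials $g_j,h_j$ together with the non-vanishing over $K$ of certain ``diagonal'' specializations of $q$-type polynomials — for which $S\subseteq J^{d}$, with the conventions $J^{0}=UT_n$ and $d=\infty$ meaning $S=\{0\}$. The candidate answer is $S=J^{d}$, i.e.\ one of $\{0\}$, a power of the radical, or $UT_n$, in every case a subspace; over a small field the degenerate vanishing of a $q$-specialization can force $d$ to jump, and this is exactly where the size condition on $K$ enters.

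The reverse inclusion $J^{d}\subseteq S$ is the heart of the matter, and I would prove it by induction on $n$ using the block decomposition of a matrix in $UT_n$ into its $(1,1)$ entry $a\in K$, its first-row tail $v\in K^{\,n-1}$, and its lower-right block $A'\in UT_{n-1}$. Substituting into $p$: the lower-right block of $p(A_1,\dots,A_m)$ equals $p(A_1',\dots,A_m')$, with the $A_i'$ ranging over all of $UT_{n-1}$, so the inductive hypothesis pins that block down; the $(1,1)$ entry is $a_1\,q(a_2,\dots,a_m)$, handled by the previous paragraph; and the remaining first row is affine in the free tails $v_1,\dots,v_m$, so it can be steered to a prescribed value once the other two pieces have been set. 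Assembling the three pieces, and checking the base case $n=2$ directly, yields $J^{d}\subseteq S$, hence $S=J^{d}$; Conjecture \ref{Con} follows, together with a description of $p(UT_n)$ that refines the degree-$\le4$ results of \cite{FM} and the strictly-upper-triangular case of \cite{Fag}.

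The step I expect to be the real obstacle is the first-row adjustment inside the induction: once the $(1,1)$ entry and the lower-right block have been fixed by the choices of the $a_i$ and the $A_i'$, one must check that the residual freedom in $v_1,\dots,v_m$ still surjects onto the required slice of the first row, and the linear system controlling this couples all the monomials $g_j,h_j$ through the noncommutative product, forcing a careful bookkeeping of the supports of the $g_j(A_2,\dots,A_m)$ and $h_j(A_2,\dots,A_m)$ — which superdiagonals they occupy, as functions of the data chosen in lower blocks — and it is precisely here that the sharp size condition on $K$ has to be isolated.
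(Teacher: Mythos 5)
You are attempting to prove a statement that the paper records as an \emph{open conjecture}: the paper itself establishes it only under the hypothesis $|K|>\frac{n(n-1)}{3}$ (Corollary 1.4, deduced from Theorem 1.3), and your sketch likewise smuggles in a cardinality hypothesis, so even if completed it would prove the partial result, not the conjecture as stated. Your route --- induction on $n$ via the block decomposition of $UT_n$ into the $(1,1)$ entry, the first-row tail $v\in K^{n-1}$, and the lower-right copy of $UT_{n-1}$ --- is genuinely different from the paper's, which uses no induction on $n$: the paper writes out every entry of $p(u_1,\ldots,u_m)$ explicitly (Proposition 2.1), characterizes $r=\mathrm{ord}(p)$ through the non-vanishing of the coefficient polynomials $p_{i_1\cdots i_r}$ evaluated on the diagonals (Lemma 3.3), and then solves for the off-diagonal unknowns via a triangular linear system whose pivots $h_{s,r+s+t}(b^{(i)}_{jk})$ are made simultaneously nonzero by a combinatorial lemma on common non-vanishing points of a family of multilinear polynomials (Lemmas 3.4 and 3.5); that lemma is exactly where the bound $|K|>\frac{(2n-3r+1)r}{2}\leq\frac{n(n-1)}{3}$ is extracted. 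Your containment $S\subseteq J^{d}$ and the identification of $d$ with data read off the diagonal specializations do match the paper's use of $\mathrm{ord}(p)$, so the skeleton is sound.

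The gap is the step you yourself flag as ``the real obstacle,'' and it is not a loose end --- it is the entire content of the theorem. Concretely: the inductive hypothesis $p(UT_{n-1})=J_{n-1}^{d}$ only asserts that \emph{some} tuple $(A_1',\ldots,A_m')$ realizes a prescribed lower-right block; it gives no control over which tuple, and the solvability of the linear system in the tails $v_1,\ldots,v_m$ (whose coefficients are built from entries of $g_j(A_2',\ldots,A_m')$, $h_j(A_2',\ldots,A_m')$ and the scalars $a_i$) depends entirely on that choice and can degenerate. To close the induction you must strengthen the inductive statement so that preimages can be chosen with additional simultaneous non-degeneracy --- precisely the problem the paper isolates and solves in Lemma 3.4, with the count $\max_u|l(u)|$ of constraints per variable producing the field bound. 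Nothing in your sketch supplies this, nor the bookkeeping of which superdiagonals the $g_j,h_j$ evaluations occupy. A secondary issue: your proposed strengthening to polynomials merely linear in one variable is broader than the paper's class (its ``linear polynomials'' have every monomial multilinear), is unproved in your sketch, and is not needed --- for the conjecture the multilinear case suffices, so you should not carry the stronger unproven claim. As written this is a plausible program, not a proof.
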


In 2019 Wang \cite{Wang2019} gave a positive answer of Conjecture \ref{Con} for $n=2$ (see also \cite{Wang20191} for a correction of the paper). We remark that Fagundes gave a positive answer of Conjecture \ref{Con} for $n=2$ with an extremely simpler proof in his master's degree dissertation (see \cite{Fagm}), but the text was written in Portuguese and the result was not published elsewhere.

In 2021 Mello \cite{Mello1} gave a positive answer of Conjecture \ref{Con} for $n=3$ on an infinite field. In 2022 Gargate and Mello \cite{Mello} gave a positive answer of Conjecture \ref{Con} on an infinite field. In the same year, Luo and Wang \cite{Wang2022} gave a positive answer of Conjecture \ref{Con} under a mild condition on the ground field $K$. More precisely, they gave the following result:

\begin{theorem}\cite[Theorem 1.1]{Wang2022}\label{T1}
Let $K$ be a field, let $m\geq 1$ be an integer, let $n\geq 2$ be an integer. Let $p(x_1,\ldots,x_m)$ be a nonzero multilinear polynomial in non-commutative variables over $K$. Suppose that $|K|>\frac{n(n-1)}{2}$. We have that $p(UT_n)=UT_n^{(t)}$ for some integer $-1\leq t\leq \frac{m}{2}-1$.
\end{theorem}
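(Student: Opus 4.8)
The plan is to begin with two costless reductions. Write $p=\sum_{\sigma\in S_m}c_\sigma\,x_{\sigma(1)}\cdots x_{\sigma(m)}$ and set $c=\sum_{\sigma\in S_m}c_\sigma$. If $c\neq 0$, substituting the identity matrix for $x_2,\dots,x_m$ collapses every monomial of $p$ to $c_\sigma x_1$, so $p(A,I,\dots,I)=cA$ for all $A\in UT_n$, whence $p(UT_n)\supseteq c\,UT_n=UT_n$ and the theorem holds with $t=-1$. So assume $c=0$. Then $p$ vanishes identically on every commutative algebra, and since $UT_n\to UT_n/J$ is an algebra homomorphism onto a commutative algebra, $p(UT_n)\subseteq J=UT_n^{(0)}$. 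Now let $t$ be the largest integer with $p(UT_n)\subseteq UT_n^{(t)}$ (so $t\geq 0$; if $p$ is an identity of $UT_n$ read this as $t=n-1$). It remains to prove $t\leq\frac{m}{2}-1$ and $UT_n^{(t)}\subseteq p(UT_n)$.

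The bound on $t$ is the soft half. Embed $UT_{t+1}$ as the top-left corner of $UT_n$: every position $(i,j)$ in that corner has $j-i\leq t$, so $UT_n^{(t)}\cap UT_{t+1}=0$, and therefore $p$ is a multilinear polynomial identity of $UT_{t+1}$. Since $p\neq 0$ and the minimal degree of a multilinear identity of $UT_{t+1}$ is $2(t+1)$ — which follows from the classical description of the $T$-ideal of $UT_{t+1}$ as generated by $[x_1,x_2][x_3,x_4]\cdots[x_{2t+1},x_{2t+2}]$ — we get $m\geq 2(t+1)$, i.e. $t\leq\frac{m}{2}-1$.

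For $UT_n^{(t)}\subseteq p(UT_n)$ I would induct on $n$, the base case $n=2$ being immediate ($p(UT_2)$ is a nonzero scalar-closed subset of the one-dimensional space $UT_2^{(0)}$, hence equal to it, unless $p$ is an identity of $UT_2$). For the inductive step one uses the two permanent symmetries of the image — closure under scalars, since $p$ is homogeneous, and invariance under conjugation by the unit group of $UT_n$ — together with the block decomposition in which $A_k$ has scalar $(1,1)$-entry $a_k$, first-row tail $u_k\in K^{n-1}$, and bottom-right $(n-1)\times(n-1)$ block $A_k'\in UT_{n-1}$. A direct computation gives that $p(A_1,\dots,A_m)$ has zero $(1,1)$-entry, bottom-right block $p(A_1',\dots,A_m')$ (depending only on the $A_k'$), and first-row tail equal to a fixed expression linear in each $u_k$, with coefficients polynomial in the $a_k$ and $A_k'$. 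By the inductive hypothesis $p(UT_{n-1})=UT_{n-1}^{(s)}$ for some $s\geq t$, and one verifies (comparing the $(t+1)$-st superdiagonals of $p(UT_n)$ and $UT_n^{(t)}$, together with the maximality of $t$) that in fact $s=t$, so any prescribed bottom-right block from $UT_{n-1}^{(t)}$ is realizable; with those $A_k'$ fixed one then chooses the $a_k$ making $(u_1,\dots,u_m)\mapsto(\text{first-row tail})$ surjective onto the subspace of $K^{n-1}$ cut out by the target, and solves for the $u_k$.

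The main obstacle is precisely this last surjectivity, together with the global fact lurking behind it: $p(UT_n)$ must be shown to be \emph{additively} closed, i.e. every element of $UT_n^{(t)}$ must arise from a \emph{single} substitution. The scalar-and-conjugation symmetry is too weak to force this on its own (an orbit need not be a subspace), so the argument has to descend the filtration $UT_n^{(t)}\supset UT_n^{(t+1)}\supset\cdots\supset 0$, at each level correcting the lower-order part of $p(A_1,\dots,A_m)$ — using the inductive description of $p(UT_{n-1})$ and the slack in the first row — without disturbing the leading superdiagonal already placed. This requires, at every stage, a nonvanishing statement for the auxiliary polynomials governing the relevant linear maps, and guaranteeing it uniformly in $n$ is exactly where the hypothesis $|K|>\frac{n(n-1)}{2}$ is spent, via a Vandermonde/counting estimate once $|K|$ exceeds the number of strictly-upper positions. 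That is the delicate core; the remainder is routine bookkeeping.
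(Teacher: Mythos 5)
Your reductions are fine as far as they go: the case $\sum_\sigma c_\sigma\neq 0$ via substituting the identity, the containment $p(UT_n)\subseteq J$ when the coefficient sum vanishes, and the bound $t\leq\frac{m}{2}-1$ via embedding $UT_{t+1}$ in the top-left corner and invoking the minimal degree $2(t+1)$ of a multilinear identity of $UT_{t+1}$ all match what the paper uses (it records the last point as ``since $p(UT_r)=\{0\}$ we easily check that $2r\leq m$''). But the entire content of the theorem is the reverse inclusion $UT_n^{(t)}\subseteq p(UT_n)$, and there your proposal stops at a strategy sketch: you name the obstacle (every target must be hit by a \emph{single} substitution), you say it requires ``a nonvanishing statement for the auxiliary polynomials'' on which the hypothesis $|K|>\frac{n(n-1)}{2}$ is spent, and you declare the remainder routine. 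That nonvanishing statement and the accounting of how the field-size bound pays for it \emph{are} the proof --- in the present paper they occupy Proposition 2.1 (the closed entry-wise formula $p_{st}=\sum p_{i_1\cdots i_k}(\bar a_{j_1j_1},\ldots,\bar a_{j_{k+1}j_{k+1}})a^{(i_1)}_{j_1j_2}\cdots a^{(i_k)}_{j_kj_{k+1}}$), Lemma 3.4 (simultaneous avoidance of zeros for a family of linear polynomials, one variable at a time, needing $|K|$ to exceed the number of constraints through any one slot), Lemma 3.5 (the count of that number, which is where $\frac{(2n-3r+1)r}{2}$ and hence $\frac{n(n-1)}{3}$ comes from), and Lemma 3.7 (a triangular linear system in the off-diagonal entries, solved superdiagonal by superdiagonal) --- and none of it is routine bookkeeping.

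Beyond being incomplete, the specific induction you propose has a structural problem. In the step ``first choose the $A_k'$ realizing the prescribed bottom-right block, then choose the $a_k$ making the first-row map surjective,'' the coefficient polynomials governing the first-row tail depend on the whole diagonal of each $A_k$, i.e.\ on $a_k$ together with the diagonal of $A_k'$; once the $A_k'$ are fixed you have only one free scalar per matrix left, and there is no reason the required simultaneous nonvanishing can be achieved by varying the $a_k$ alone. (There is also the boundary case $t=n-2$, where $p$ is an identity of $UT_{n-1}$, the inductive hypothesis is vacuous, and the image lives entirely in the $(1,n)$ entry; the paper treats this separately in Lemma 3.8.) The paper sidesteps the induction on $n$ altogether by choosing \emph{all} diagonal vectors first so that every relevant coefficient polynomial is nonzero simultaneously, and only then solving for the off-diagonal entries; note also that within this paper the statement you are proving is not argued directly but obtained as the multilinear special case of Theorem 1.2. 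Until you either carry out your counting argument with only the $a_k$ free, or reorganize along the paper's lines, the proof is not complete.
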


As an application of Theorem \ref{T1}, Fagundes and Koshlukov \cite{FK} investigated the images of multilinear graded polynomials on upper triangular matrix algebras over a field.

In 2021 Wang, Zhou, and Luo \cite{WangZL} gave a complete description of
the images of polynomials with zero constant term on $2\times 2$ upper triangular matrix algebras over an algebraically closed field $K$. In 2022 Chen, Luo, and Wang \cite{CLW} gave a complete description of the images of polynomials with zero constant term on $3\times 3$ upper triangular matrix algebras over an algebraically closed field $K$. In the same year, Panja and Prasad \cite{PP} discussed some cases of the images of polynomials with zero constant term on upper triangular matrix algebras over an algebraically closed field $K$. Recently Chen \cite{ChenLuoWang1} gave a complete description of the images of polynomials with zero constant term on upper triangular matrix algebras over an algebraically closed field.

In the present paper we shall give a complete description of the images of linear polynomials with zero constant term on upper triangular matrix algebras under a mild condition on the ground field. More precisely, we shall prove the following result.

\begin{theorem}\label{T2}
Let $K$ be a field, let $m\geq 1$ be an integer, let $n\geq 2$ be an integer. Let $p(x_1,\ldots,x_m)$ be a nonzero linear polynomial in non-commutative variables over $K$. Suppose that $|K|>\frac{n(n-1)}{3}$. We have that $p(UT_n)=UT_n^{(t)}$ for some integer $-1\leq t\leq \frac{m}{2}-1$.
\end{theorem}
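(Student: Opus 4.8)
The plan is to single out the integer $t$ explicitly, prove the two inclusions $p(UT_n)\subseteq UT_n^{(t)}$ and $UT_n^{(t)}\subseteq p(UT_n)$ separately, and read off $t\le\frac m2-1$ along the way. Decompose $p=\sum_{k\ge 1}p^{(k)}$ into its homogeneous components by total degree; since $p$ is linear, every $p^{(k)}$ is a $K$-linear combination of squarefree words of length $k$. Recall from the introduction that $J^{s}=UT_n^{(s-1)}$ and $UT_n^{(n-1)}=0$, and set $t:=\max\{s\in\{-1,0,\dots,n-1\}:p(UT_n)\subseteq UT_n^{(s)}\}$, which is well defined since the set contains $-1$. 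If $t=n-1$ then $p$ is a polynomial identity of $UT_n$, so $p(UT_n)=0=UT_n^{(t)}$, and the bound $m\ge 2n$ (a nonzero linear identity of $UT_n$ has degree at least $2n$) gives $t=n-1\le\frac m2-1$; so assume from now on $-1\le t\le n-2$, whence $p(UT_n)\subseteq UT_n^{(t)}$ and $p(UT_n)\not\subseteq UT_n^{(t+1)}$. Thus everything reduces to the reverse inclusion and to the estimate on $t$.

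For the estimate I would establish the auxiliary fact that \emph{a nonzero linear polynomial $q$ with $q(UT_n)\subseteq J^{s}$ has $\deg q\ge 2s$}; applied with $q=p$ and $s=t+1$ it yields $m\ge\deg p\ge 2(t+1)$, i.e. $t\le\frac m2-1$. The auxiliary fact is proved by contraposition: if $\deg p<2s$, one substitutes for each variable a matrix supported on a single (super)diagonal, chosen so that the monomials of a suitable component of $p$ do not cancel and so that the radical depth produced — which for a word of length $\ell$ cannot exceed roughly $\ell/2$, since depth is gained only through commutators — is strictly less than $s$, forcing $p(UT_n)\not\subseteq J^{s}$. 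This is the degree bookkeeping already implicit in the proof of Theorem~\ref{T1}.

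The core is the reverse inclusion $UT_n^{(t)}\subseteq p(UT_n)$. After relabelling assume $x_1$ occurs in some monomial of $p$, and use that $p$ is affine-linear in $x_1$ to write
\[
p(x_1,x_2,\dots,x_m)=C(x_2,\dots,x_m)+\sum_{j}A_j(x_2,\dots,x_m)\,x_1\,B_j(x_2,\dots,x_m),
\]
with $A_j,B_j,C$ linear in $x_2,\dots,x_m$. For a fixed tuple $\underline{x}'=(x_2,\dots,x_m)\in UT_n^{m-1}$ the map $L_{\underline{x}'}\colon UT_n\to UT_n$, $x_1\mapsto\sum_jA_j(\underline{x}')x_1B_j(\underline{x}')$, is $K$-linear; since $L_{\underline{x}'}(x_1)=p(x_1,\underline{x}')-C(\underline{x}')$ and both $p(x_1,\underline{x}')$ and $C(\underline{x}')=p(0,\underline{x}')$ lie in $p(UT_n)\subseteq UT_n^{(t)}$, the image of $L_{\underline{x}'}$ and the vector $C(\underline{x}')$ lie in $UT_n^{(t)}$. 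Hence it suffices to produce one $\underline{x}'$ with $L_{\underline{x}'}(UT_n)=UT_n^{(t)}$ (equivalently, with $L_{\underline{x}'}$ of maximal rank $\dim UT_n^{(t)}$): for any target $b\in UT_n^{(t)}$ one then solves the linear equation $L_{\underline{x}'}(x_1)=b-C(\underline{x}')$, obtaining $p(x_1,\underline{x}')=b$. Maximality of the rank is the non-vanishing of a fixed $\dim UT_n^{(t)}\times\dim UT_n^{(t)}$ minor $\Delta(\underline{x}')$ of the matrix of $L_{\underline{x}'}$; because $p$ is linear, each entry of that matrix has degree at most $1$ in the entries of any single $x_i$. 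I would then restrict to a one-parameter family $x_i=u_i(\lambda)$, with $u_i(\lambda)$ explicit matrices built from a fixed nilpotent staircase together with $\lambda$-weighted matrix units (in the spirit of the substitutions of \cite{Mello,Wang2022,ChenLuoWang1}), engineered so that $\lambda\mapsto\Delta(u(\lambda))$ is a \emph{low-degree} one-variable polynomial; any $\lambda\in K$ outside its zero set finishes the argument.

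Two points carry the weight. First, one must show $\Delta(u(\lambda))\not\equiv0$, i.e. that some evaluation realises the full space $UT_n^{(t)}$ in this strong rank sense; here the affine-in-each-variable structure of a linear polynomial is exactly what keeps the maps $L_{\underline{x}'}$ tractable, but pinning down $t$ and exhibiting a maximal-rank $L_{\underline{x}'}$ still needs a careful case analysis on the shape of $p$ (the longest step; the description over algebraically closed fields in \cite{ChenLuoWang1} can be used as a guide, or invoked after base change to $\overline{K}$, since the value of $t$ depends only on $p$ and $\operatorname{char}K$). Second — and this is where the hypothesis $|K|>\frac{n(n-1)}{3}$, rather than the $|K|>\frac{n(n-1)}{2}$ of Theorem~\ref{T1}, is used — one must bound the number of exceptional $\lambda$. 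The crude estimate $\deg_\lambda\Delta(u(\lambda))$ is far too large, so the substitutions must be arranged superdiagonal by superdiagonal so that the determinantal obstruction collapses to an essentially Vandermonde-type product: linearity lets the $k$-th superdiagonal be handled with about $\tfrac23(n-k)$ linear obstructions in $\lambda$, as opposed to roughly $n-k$ in the multilinear situation, so that summing over $k=1,\dots,n-1$ the exceptional set has at most $\tfrac{n(n-1)}{3}$ elements. I expect this sharpened bookkeeping, together with the verification that $\Delta\not\equiv0$ in every case of the structure of $p$, to be the main obstacle.
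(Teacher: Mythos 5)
Your outline identifies the right skeleton (define $t$, prove both inclusions, read off $t\le\frac m2-1$ from the fact that a nonzero identity of $UT_s$ has degree at least $2s$), and those bookends are fine. But the two steps that carry all of the content of the theorem are exactly the ones you leave open, so this is a gap rather than a proof. First, the existence of a tuple $\underline{x}'$ for which your one-variable map $L_{\underline{x}'}$ has image all of $UT_n^{(t)}$ is asserted, not proved; you defer it to ``a careful case analysis on the shape of $p$'' or to base change to $\overline K$. The base-change shortcut is not sound here: the whole point of the theorem is a quantitative hypothesis on $|K|$, and over a small field the coefficient polynomials that control $t$ (and the rank of $L_{\underline{x}'}$) can vanish identically on $K$-points while being nonzero as polynomials; indeed $\mathrm{ord}(p)$ is defined via $\mathcal T(UT_s)$ over $K$ and is sensitive to $|K|$. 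The paper avoids any rank/determinant argument entirely: Proposition \ref{P1} gives a closed formula for each entry of a value of $p$, Lemma \ref{L3.3} identifies $t+1=\mathrm{ord}(p)$ with the first level at which some coefficient polynomial $p_{i_1'\cdots i_r'}$ is not identically zero on $K$, and Lemma \ref{L3.7} fixes the diagonals and most off-diagonal entries so that the system in the remaining unknowns $x^{(i_r')}_{r+s-1,r+s+t}$ becomes \emph{triangular} with nonzero pivots $h_{s,r+s+t}(b^{(i)}_{jk})$, hence solvable for every target in $UT_n^{(r-1)}$. No $\dim UT_n^{(t)}\times\dim UT_n^{(t)}$ minor ever has to be controlled.

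Second, your derivation of the bound $|K|>\frac{n(n-1)}{3}$ (``about $\tfrac23(n-k)$ linear obstructions per superdiagonal, Vandermonde-type collapse'') is a heuristic with no argument behind it, and you say yourself that you expect it to be the main obstacle. In the paper the bound has a precise combinatorial origin: the pivots to be made simultaneously nonzero are the values $p_{i_1'\cdots i_r'}(\bar b_s,\dots,\bar b_{r+s-1},\bar b_{r+s+t})$ over all pairs $1\le s<r+s+t\le n$; each $\bar b_i$ occurs in at most $\frac{(2n-3r+1)r}{2}$ of these constraints (Lemma \ref{L3.5}), a one-variable-at-a-time greedy argument (Lemma \ref{L3.4}) needs only $|K|$ larger than that incidence number, and Lemma \ref{L3.9} gives $\max_r\frac{(2n-3r+1)r}{2}\le\frac{n(n-1)}{3}$. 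To salvage your route you would have to replace the $\tfrac23$ heuristic by an actual incidence or degree count of this kind; as written, the quantitative claim --- which is precisely the improvement over Theorem \ref{T1} --- is unsupported.
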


As a consequence of Theorem \ref{T2} we have the following result, which obviously improves Theorem \ref{T1}.

\begin{corollary}\label{CC}
Let $K$ be a field, let $m\geq 1$ be an integer, let $n\geq 2$ be an integer. Let $p(x_1,\ldots,x_m)$ be a nonzero multilinear polynomial in non-commutative variables over $K$. Suppose that $|K|>\frac{n(n-1)}{3}$. We have that $p(UT_n)=UT_n^{(t)}$ for some integer $-1\leq t\leq \frac{m}{2}-1$.
\end{corollary}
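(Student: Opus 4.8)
The plan is very short, since Corollary \ref{CC} is meant to be read off from Theorem \ref{T2}. First I would recall that a nonzero multilinear polynomial in the variables $x_1,\dots,x_m$ has the form $p(x_1,\dots,x_m)=\sum_{\sigma\in S_m}a_\sigma\,x_{\sigma(1)}x_{\sigma(2)}\cdots x_{\sigma(m)}$ with the scalars $a_\sigma\in K$ not all zero; in every monomial of $p$ each of $x_1,\dots,x_m$ occurs exactly once, so no variable is repeated inside a monomial and the constant term is zero. Thus $p$ is, in particular, a nonzero linear polynomial in the $m$ variables $x_1,\dots,x_m$. Since the cardinality assumption $|K|>\frac{n(n-1)}{3}$ is exactly the one appearing in Theorem \ref{T2}, I would then simply invoke that theorem to conclude $p(UT_n)=UT_n^{(t)}$ for some integer $-1\le t\le \frac{m}{2}-1$, which is the assertion to be proved. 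The only bookkeeping point is that for a multilinear polynomial the number of variables $m$ coincides with the total degree, so the bound on $t$ carries over unchanged.

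I would then add a remark explaining why this statement improves Theorem \ref{T1}: that theorem gives the same description of $p(UT_n)$ for nonzero multilinear $p$, but only when $|K|>\frac{n(n-1)}{2}$; as $\frac{n(n-1)}{2}>\frac{n(n-1)}{3}$ for all $n\ge 2$, Corollary \ref{CC} covers every field handled by Theorem \ref{T1} together with any field $K$ satisfying $\frac{n(n-1)}{3}<|K|\le\frac{n(n-1)}{2}$, and in particular it reproves Conjecture \ref{Con} over all such fields, the image being the vector space $UT_n^{(t)}$.

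There is essentially no obstacle in the deduction itself — the content is entirely in Theorem \ref{T2}. If one instead wanted a self-contained argument, the hard part would be exactly the hard part of Theorem \ref{T2}: forcing the image to coincide with one of the spaces $UT_n^{(t)}$ requires the ground field to be large enough for the linearization and interpolation arguments over the superdiagonal entries of $UT_n$ to go through, and it is the sharpened quantitative bound $\frac{n(n-1)}{3}$ obtained there that makes the present corollary a genuine strengthening of Theorem \ref{T1}.
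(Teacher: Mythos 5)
Your deduction is exactly the paper's (implicit) argument: a nonzero multilinear polynomial is in particular a nonzero linear polynomial with zero constant term in the sense of equation (\ref{e1}), so Corollary \ref{CC} follows immediately from Theorem \ref{T2} under the same hypothesis $|K|>\frac{n(n-1)}{3}$. The proposal is correct and takes essentially the same route as the paper.
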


We organize this paper as follows: In Section $2$ we shall give some prelimiaries. In Section $3$ we shall give the proof of Theorem \ref{T2}. In Section $4$, as an application of Corollary \ref{CC} we shall improve two results obtained by Fagundes and Koshlukov in \cite{FK}.

We remark that the proof of Theorem \ref{T2} is completely different from that of Theorem \ref{T1}. We also remark that the proof of Theorem \ref{T2} comes from two aspects: one comes from some arguments in \cite{ChenLuoWang1}, another comes from some new arguments.

\section{prelimiaries}

By $\mathcal{N}$ we denote the set of all positive integers. Let $K$ be a field. For $n\in \mathcal{N}$ with $n\geq 2$, we can write
\[
UT_n=\left(
\begin{array}{cc}
UT_{n-1} & K^{n-1}\\
 & K
\end{array} \right).
\]

Let $\mathcal{A}$ be an algebra over $K$. Denote by $\mathcal{T}(\mathcal{A})$ the set of all polynomial identities of $\mathcal{A}$. Then it is easy to see that
\[
\mathcal{T}(K)\supset \mathcal{T}(UT_2)\supset \mathcal{T}(UT_3)\supset\cdots.
\]
So, given a polynomial $p(x_1,\ldots,x_m)$ in non-commutative variables over $K$, we define its \textbf{order}
as the least integer $m$ such that $p\in \mathcal{T}(UT_m)$ but $p\not\in \mathcal{T}(UT_{m+1})$.
Note that $T_1(K)=K$. A polynomial $p$ has order $0$ if $p\not\in \mathcal{T}(K)$. We denote the order of $p$ by $\mbox{ord}(p)$. For a detailed introduction of the order of polynomials
we refer the reader to the book \cite[Chapter 5]{Drensky}.

We remark that the structure of $\mathcal{T}(UT_n)$ is essentially known. For instance,
the basis of $\mathcal{T}(UT_n)$ has been described in \cite[Theorem 5.4]{Drensky},
if $K$ is infinite field. A similar insight holds in the context of finite fields.

Suppose that $p(x_1,\ldots,x_m)$ be a nonzero polynomial with zero constant term over $K$. Suppose that $\mbox{ord}(p)=r$, where $r\geq 2$. Since $p(UT_r)=\{0\}$ we easily check that $2r\leq m$.

Let $m,k\in \mathcal{N}$. We set
\[
T^k_m=\left\{(i_1,\ldots,i_k)\in \mathcal{N}^k~|~\mbox{$1\leq i_1,\ldots,i_k\leq m$, $i_s\neq i_t$ for all $s\neq t$}\right\}.
\]

Let $p(x_1,\ldots,x_m)$ be a linear polynomial with zero constant term over a field $K$. We can write
\begin{equation}\label{e1}
p(x_1,\ldots,x_m)=\sum\limits_{k=1}^m\left(\sum\limits_{(i_1,\ldots,i_k)\in T^k_m}\lambda_{i_1\cdots i_k}x_{i_1}\cdots x_{i_k}\right),
\end{equation}
where $\lambda_{i_1\cdots i_k}\in K$.

The following result is similar to \cite[Lemma 3.2]{ChenLuoWang1}. We give its proof for completeness.

\begin{proposition}\label{P1}
Let $p(x_1,\ldots,x_m)$ be a linear polynomial with zero constant term in
non-commutative variables over $K$. For $u_i=(a_{jk}^{(i)})\in UT_n$, $i=1,\ldots,m$, we set
\[
\bar{a}_{jj}=(a_{jj}^{(1)},\ldots,a_{jj}^{(m)})
\]
for all $j=1,\ldots,n$. We have that
\begin{equation}\label{e2}
p(u_1,\ldots,u_m)=(p_{st}),
\end{equation}
where $p_{ss}=p(\bar{a}_{ss})$ for all $s=1,\ldots,n$, and
\[
p_{st}=\sum\limits_{k=1}^{t-s}\left(\sum\limits_{\substack{s=j_1<j_2<\cdots <j_{k+1}=t\\(i_1,\ldots,i_k)\in T^k_m}}
p_{i_1\cdots i_k}(\bar{a}_{j_1j_1},\ldots,\bar{a}_{j_{k+1}j_{k+1}})a_{j_1j_2}^{(i_1)}\cdots a_{j_{k}j_{k+1}}^{(i_k)}\right)
\]
for all $1\leq s<t\leq n$, where $p_{i_1,\ldots,i_k}(\bar{z}_1,\ldots,\bar{z}_{k+1})$ is a polynomial on $m(k+1)$-variables over $K$, where $k=1,\ldots,n-1$.
\end{proposition}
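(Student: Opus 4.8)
The plan is to prove the formula by induction on $n$, using the block decomposition $UT_n=\left(\begin{smallmatrix} UT_{n-1} & K^{n-1}\\ & K\end{smallmatrix}\right)$ recorded in Section~2. For $n=1$ the matrix $UT_1=K$ is commutative and the statement is just that $p(u_1,\ldots,u_m)=p(\bar a_{11})$, which is immediate since a linear polynomial evaluated on commuting scalars agrees with the ordinary polynomial in those scalars; so the base case is clear.

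For the inductive step, write each $u_i=\left(\begin{smallmatrix} v_i & w_i\\ 0 & a_{nn}^{(i)}\end{smallmatrix}\right)$, where $v_i\in UT_{n-1}$ is the leading principal block and $w_i\in K^{n-1}$ is the last column (truncated). A product $x_{i_1}\cdots x_{i_k}$ evaluated at the $u_i$ has, in the last column, the entries $\sum_{\ell=1}^{k} v_{i_1}\cdots v_{i_{\ell-1}} w_{i_\ell} a_{nn}^{(i_{\ell+1})}\cdots a_{nn}^{(i_k)}$; thus the upper-left $(n-1)\times(n-1)$ block of $p(u_1,\ldots,u_m)$ is exactly $p(v_1,\ldots,v_m)$, to which the induction hypothesis applies, giving all entries $p_{st}$ with $1\le s<t\le n-1$ together with the diagonal entries $p_{ss}$ for $s\le n-1$. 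The entry $(n,n)$ is $p(a_{nn}^{(1)},\ldots,a_{nn}^{(m)})=p(\bar a_{nn})$ by the commutative (scalar) computation. It remains only to identify the entries $p_{sn}$ for $1\le s\le n-1$, i.e.\ the last column of $p(u_1,\ldots,u_m)$.

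For those entries I would expand $p$ using \eqref{e1}, and for each monomial $\lambda_{i_1\cdots i_k}x_{i_1}\cdots x_{i_k}$ read off the $(s,n)$-entry of $u_{i_1}\cdots u_{i_k}$. Choosing a cut point $\ell$ where the path through the matrix product leaves the $(n-1)\times(n-1)$ block and jumps to column $n$, this $(s,n)$-entry is $\sum_{\ell=1}^{k}\bigl(u_{i_1}\cdots u_{i_{\ell-1}}\bigr)_{s,?}\,\cdots$; grouping all contributions that pass through a fixed chain of columns $s=j_1<j_2<\cdots<j_{\ell}<j_{\ell+1}=n$ (the last step always landing in column $n$), each contribution factors as a product of off-diagonal matrix entries $a_{j_1 j_2}^{(i_1)}\cdots a_{j_\ell n}^{(i_\ell)}$ times a coefficient that depends only on the diagonals $\bar a_{j_1 j_1},\ldots,\bar a_{nn}$ — these diagonal-dependent coefficients being precisely the $p_{i_1\cdots i_k}(\bar z_1,\ldots,\bar z_{k+1})$ claimed in the statement (and they can be made explicit by tracking how many diagonal factors of each color appear before and after each jump). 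Relabelling $k+1$ for the number of columns in the chain and $t=n$ yields the asserted formula. The formula for general $p_{st}$ with $t<n$ then follows because it is just the $(s,t)$-entry produced by the induction hypothesis applied to the leading block, and passing to the leading $t\times t$ block of $UT_n$ is harmless.

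The main obstacle is purely bookkeeping: correctly defining the auxiliary polynomials $p_{i_1\cdots i_k}(\bar z_1,\ldots,\bar z_{k+1})$ so that they absorb all the diagonal entries appearing between consecutive jumps, and checking that the indexing of the jump-chain $s=j_1<\cdots<j_{k+1}=t$ matches up with the subsequence $(i_1,\ldots,i_k)\in T^k_m$ of the original monomial after one discards the variables spent on diagonal blocks. There is no real mathematical difficulty — no field hypothesis is even needed here — but one must be careful that the recursion in $n$ is compatible with the sum over all monomial lengths $k$ in \eqref{e1}, which is why I would phrase the induction so that at stage $n$ one only needs to verify the last column and then invoke the hypothesis verbatim for the rest.
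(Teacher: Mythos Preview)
Your proposal is correct, but it is organized differently from the paper's proof. The paper proceeds by a single direct computation: for each monomial $u_{i_1}\cdots u_{i_k}$ it writes the $(s,t)$-entry as the sum over all weakly increasing chains $s=j_1\le j_2\le\cdots\le j_{k+1}=t$ of the product $a_{j_1j_2}^{(i_1)}\cdots a_{j_kj_{k+1}}^{(i_k)}$, sums over all monomials using \eqref{e1}, and then regroups the resulting double sum according to the underlying \emph{strict} chain $s=j_1<\cdots<j_{k+1}=t$, with all repeated indices (the diagonal factors) absorbed into the coefficient polynomials $p_{i_1\cdots i_k}$. No induction is used.

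Your inductive route is valid, and the observation that the leading $(n-1)\times(n-1)$ block of $p(u_1,\ldots,u_m)$ equals $p(v_1,\ldots,v_m)$ is exactly right. However, the inductive wrapper does not actually save any work: your analysis of the last column (tracking the cut point $\ell$, grouping by the strict chain $s=j_1<\cdots<j_{\ell+1}=n$, and absorbing the diagonal pieces into a coefficient depending only on $\bar a_{j_1j_1},\ldots,\bar a_{nn}$) is precisely the same regrouping the paper performs globally for every $(s,t)$ at once. One point you should make explicit in your write-up is that the coefficient polynomials $p_{i_1\cdots i_k}$ you extract for the last column coincide with those produced by the induction hypothesis for $t<n$; this is true because they are determined solely by the monomial structure of $p$ and the tuple $(i_1,\ldots,i_k)$, not by the particular entry $(s,t)$, but in an inductive argument it needs to be said. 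The paper's direct approach sidesteps this consistency check since all entries are handled uniformly in one pass.
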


\begin{proof}
For any $(i_1,\ldots,i_k)\in T^k_m$, $k=1,\ldots,m$, we easily check that
\[
u_{i_1}\cdots u_{i_k}=(m_{st}),
\]
where
\[
m_{st}=\sum\limits_{s=j_1\leq j_2\leq \cdots \leq j_{k+1}=t}a_{j_1j_2}^{(i_1)}\cdots a_{j_{k}j_{k+1}}^{(i_k)}
\]
for all $1\leq s\leq t\leq n$. It follows from (\ref{e1}) that

\begin{eqnarray*}
\begin{split}
p(u_1,\ldots,u_m)&=\sum\limits_{k=1}^m\left(\sum\limits_{(i_1,\ldots,i_k)\in T^k_m}\lambda_{i_1\cdots i_k}u_{i_1}\cdots u_{i_k}\right)\\
&=\sum\limits_{k=1}^m\left(\sum\limits_{(i_1,\ldots,i_k)\in T^k_m}\lambda_{i_1\cdots i_k}\left(
\begin{array}{cccc}
m_{11} & m_{12} & \ldots & m_{1n}\\
0 & m_{22} & \ldots & m_{2n}\\
 \vdots & \vdots & \ddots & \vdots\\
 0 & 0 & \ldots & m_{nn}
\end{array} \right)\right)\\
&=(p_{st}),
\end{split}
\end{eqnarray*}
where
\begin{eqnarray*}
\begin{split}
p_{st}&=\sum\limits_{k=1}^m\left(\sum\limits_{(i_1,\ldots,i_k)\in T^k_m}\lambda_{i_1\cdots i_k}m_{st}\right)\\
&=\sum\limits_{k=1}^m\left(\sum\limits_{(i_1,\ldots,i_k)\in T^k_m}\lambda_{i_1\cdots i_k}\left(
\sum\limits_{s=j_1\leq j_2\leq \cdots \leq j_{k+1}=t}a_{j_1j_2}^{(i_1)}\cdots a_{j_{k}j_{k+1}}^{(i_k)}\right)\right)\\
&=\sum\limits_{k=1}^{m}\left(\sum\limits_{\substack{s=j_1\leq j_2\leq \cdots \leq j_{k+1}=t\\(i_1,\ldots,i_k)\in T^k_m}}\lambda_{i_1\cdots i_k}a_{j_1j_2}^{(i_1)}\cdots a_{j_{k}j_{k+1}}^{(i_k)}\right),
\end{split}
\end{eqnarray*}
where $1\leq s\leq t\leq n$. We get that
\begin{eqnarray*}
\begin{split}
p_{ss}&=\sum\limits_{k=1}^{m}\left(\sum\limits_{(i_1,\ldots,i_k)\in T^k_m}\lambda_{i_1\cdots i_k}a_{ss}^{(i_1)}\cdots a_{ss}^{(i_k)}\right)\\
&=p(\bar{a}_{ss}),
\end{split}
\end{eqnarray*}
for all $s=1,\ldots,n$, and
\begin{eqnarray*}
\begin{split}
p_{st}&=\sum\limits_{k=1}^{m}\left(\sum\limits_{\substack{s=j_1\leq j_2\leq \cdots \leq j_{k+1}=t\\(i_1,\ldots,i_k)\in T^k_m}}\lambda_{i_1\cdots i_k}a_{j_1j_2}^{(i_1)}\cdots a_{j_{k}j_{k+1}}^{(i_k)}\right)\\
&=\sum\limits_{k=1}^{t-s}\left(\sum\limits_{\substack{s=j_1<j_2<\cdots <j_{k+1}=t\\(i_1,\ldots,i_k)\in T^k_m}}
p_{i_1\cdots i_k}(\bar{a}_{j_1j_1},\ldots,\bar{a}_{j_{k+1}j_{k+1}})a_{j_1j_2}^{(i_1)}\cdots a_{j_{k}j_{k+1}}^{(i_k)}\right)
\end{split}
\end{eqnarray*}
for all $1\leq s<t\leq n$, where $p_{i_1,\ldots,i_k}(\bar{z}_1,\ldots,\bar{z}_{k+1})$ is a polynomial on $m(k+1)$-variables over $K$, where $k=1,\ldots,n-1$. This proves the result.
\end{proof}

\section{the proof of Theorem \ref{T2}}

Let $p(x_1,\ldots,x_m)$ be a linear polynomial with zero constant term over $K$. For any $(i_1,\ldots,i_k)\in T^k_m$, $k=1,\ldots,m$, by $\alpha_{i_1\cdots i_k}$ we denote the sum of coefficients of all monomials consisting of $x_{i_1},\ldots,x_{i_k}$ in $p(x_1,\ldots,x_m)$.

We begin with the following result, which is of some independent interests.

\begin{lemma}\label{L3.1}
Let $m\geq 1$ be integer. Let $K$ be a field. Let $p(x_1,\ldots, x_m)$ be a linear polynomial with zero constant term over $K$. Suppose that $p(K)\neq\{0\}$. We have that $p(K)=K$.
\end{lemma}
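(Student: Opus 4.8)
The plan is to reduce the statement to a fact about a single linear polynomial in \emph{commuting} variables, since on the one-dimensional algebra $K$ all variables commute. Writing $p$ as in (\ref{e1}), we evaluate on scalars $a_1,\ldots,a_m\in K$; because $K$ is commutative, the monomial $x_{i_1}\cdots x_{i_k}$ contributes $a_{i_1}\cdots a_{i_k}$, and collecting all monomials supported on the same index set $\{i_1,\ldots,i_k\}$ gives
\[
p(a_1,\ldots,a_m)=\sum_{k=1}^{m}\ \sum_{1\le i_1<\cdots<i_k\le m}\alpha_{i_1\cdots i_k}\,a_{i_1}\cdots a_{i_k},
\]
where $\alpha_{i_1\cdots i_k}$ is the sum of coefficients of all monomials in $p$ consisting of $x_{i_1},\ldots,x_{i_k}$, in the notation introduced just before the lemma. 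Thus $p(K)$ is the image of the (ordinary, commutative) polynomial function $q(a_1,\ldots,a_m):=\sum \alpha_{i_1\cdots i_k}a_{i_1}\cdots a_{i_k}$ on $K^m$, and the hypothesis $p(K)\neq\{0\}$ says $q$ is not identically zero as a function on $K^m$, i.e. some $\alpha_{i_1\cdots i_k}\neq 0$ (here one must be slightly careful if $K$ is finite, but a nonzero value is exhibited below regardless).

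Next I would pick an index set of minimal size on which $q$ has a nonzero coefficient: choose $k$ minimal and then $i_1<\cdots<i_k$ so that $\alpha_{i_1\cdots i_k}\neq 0$. Specialize $a_j=0$ for every $j\notin\{i_1,\ldots,i_k\}$; by minimality of $k$ all surviving terms involve \emph{all} of $a_{i_1},\ldots,a_{i_k}$, so the specialized polynomial is exactly
\[
q(a_{i_1},\ldots,a_{i_k})=\Big(\sum \alpha_{i_1\cdots i_k}\Big)\,a_{i_1}a_{i_2}\cdots a_{i_k}=\alpha_{i_1\cdots i_k}\,a_{i_1}a_{i_2}\cdots a_{i_k},
\]
a nonzero scalar multiple of a single monomial. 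Now for any target $c\in K$ set $a_{i_1}=\alpha_{i_1\cdots i_k}^{-1}c$ and $a_{i_2}=\cdots=a_{i_k}=1$; this produces value $c$. Hence every element of $K$ lies in $p(K)$, so $p(K)=K$.

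The only mildly delicate point—and the one I would treat carefully rather than the algebra above—is the passage from ``$p(K)\neq\{0\}$ as a set of values'' to ``some $\alpha_{i_1\cdots i_k}\neq 0$'': a priori over a small finite field a nonzero polynomial could vanish as a function, but here the hypothesis already supplies an actual nonzero value $p(a)\neq 0$ for some tuple $a$, and the displayed monomial identity $q=\sum\alpha_{i_1\cdots i_k}a_{i_1}\cdots a_{i_k}$ is an identity of \emph{polynomials}, so $p(a)\neq0$ forces at least one $\alpha_{i_1\cdots i_k}$ to be nonzero. From that point the minimal-support specialization argument goes through over an arbitrary field with no size restriction, which is why Lemma \ref{L3.1} needs no hypothesis on $|K|$. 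I expect no genuine obstacle; the proof is short and purely combinatorial once the reduction to commuting scalars is made.
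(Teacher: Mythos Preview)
Your proof is correct and essentially identical to the paper's own argument: both collapse $p$ on commuting scalars to $\sum_{k}\sum_{1\le i_1<\cdots<i_k\le m}\alpha_{i_1\cdots i_k}a_{i_1}\cdots a_{i_k}$, pick a minimal $k$ (the paper calls it $r$) with some $\alpha_{i'_1\cdots i'_k}\neq 0$, and then specialize $a_{i'_1}=\alpha_{i'_1\cdots i'_k}^{-1}c$, $a_{i'_2}=\cdots=a_{i'_k}=1$, and all other $a_j=0$ to hit an arbitrary $c\in K$. Your remark about the passage from $p(K)\neq\{0\}$ to some $\alpha_{i_1\cdots i_k}\neq 0$ is handled in the paper implicitly by the same observation you make, so there is no discrepancy.
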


\begin{proof}
For any $a_1,\ldots,a_m\in K$ we get from (\ref{e1}) that
\begin{eqnarray}\label{e33}
\begin{split}
p(a_1,\ldots,a_m)&=\sum\limits_{k=1}^m\left(\sum\limits_{(i_1,\ldots,i_k)\in T^k_m}\lambda_{i_1\cdots i_k}a_{i_1}\cdots a_{i_k}\right)\\
&=\sum\limits_{k=1}^m\left(\sum\limits_{1\leq i_1<\cdots <i_k\leq m}\alpha_{i_1\cdots i_k}a_{i_1}\cdots a_{i_k}\right).
\end{split}
\end{eqnarray}
Since $p(K)\neq\{0\}$, we have that there exists a minimum integer $r\geq 1$ such that
\[
\alpha_{i_1'\cdots i_r'}\neq 0
\]
for some $1\leq i_1'<\cdots <i_r'\leq m$. We rewrite (\ref{e33}) as follows:
\begin{equation}\label{e44}
p(a_1,\ldots,a_m)=\sum\limits_{k=r}^m\left(\sum\limits_{1\leq i_1<\cdots <i_k\leq m}\alpha_{i_1\cdots i_k}a_{i_1}\cdots a_{i_k}\right).
\end{equation}

For any $a'\in K$, we take $a_{i}\in K$, $i=1,\ldots,m$, where
\[
\left\{
\begin{aligned}
a_{i_1'}&=\alpha_{i_1'\cdots i_r'}^{-1}a';\\
a_{i_k'}&=1_{K},\quad\mbox{$k=2,\ldots,r$};\\
a_i&=0, \quad\mbox{for all $i\not\in \{i_1',\ldots,i_r'\}$}.
\end{aligned}
\right.
\]
It follows from (\ref{e44}) that
\begin{eqnarray*}
\begin{split}
p(a_1,\ldots,a_m)&=\alpha_{i_1'\cdots i_r'}a_{i_1'}\cdots a_{i_r'}\\
&=\alpha_{i_1'\cdots i_r'}\alpha_{i_1'\cdots i_r'}^{-1}a'\\
&=a'.
\end{split}
\end{eqnarray*}
This implies that $p(K)=K$. This proves the result.
\end{proof}

Using the same argument as that of Lemma \ref{L3.1} we can obtain the following result.

\begin{lemma}\label{L3.2}
Let $n\geq 2$ and $m\geq 1$ be integers. Let $K$ be a field. Let $p(x_1,\ldots, x_m)$ be a linear polynomial with zero constant term over $K$. Suppose that $p(K)\neq\{0\}$. We have that $p(UT_n)=UT_n$.
\end{lemma}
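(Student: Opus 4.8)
The plan is to mimic the proof of Lemma \ref{L3.1} almost verbatim, replacing scalar arguments by matrices and the target scalar $a'$ by an arbitrary target matrix $B\in UT_n$. First I would record the trivial inclusion $p(UT_n)\subseteq UT_n$, which holds because $UT_n$ is an associative $K$-algebra and $p$ has zero constant term. It therefore suffices to show that every $B\in UT_n$ lies in $p(UT_n)$.

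Exactly as in Lemma \ref{L3.1}, the hypothesis $p(K)\neq\{0\}$ forces, via the commutative specialization of (\ref{e1}) recorded in (\ref{e33}), that some coefficient sum $\alpha_{i_1\cdots i_k}$ is nonzero; hence there is a least integer $r\geq 1$ with $\alpha_{i_1'\cdots i_r'}\neq 0$ for some $1\leq i_1'<\cdots<i_r'\leq m$, and minimality gives $\alpha_S=0$ for every variable set $S$ of size less than $r$. Given a target $B\in UT_n$, I would then choose the tuple
\[
x_{i_1'}=\alpha_{i_1'\cdots i_r'}^{-1}B,\qquad x_{i_k'}=I_n\ (k=2,\ldots,r),\qquad x_i=0\ \text{for all }i\notin\{i_1',\ldots,i_r'\},
\]
where $I_n$ denotes the identity matrix, in direct parallel to the scalar substitution used in Lemma \ref{L3.1} with $1_K$ replaced by $I_n$.

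The main computation is to evaluate $p$ at this tuple using (\ref{e1}). Any monomial involving a variable outside $\{x_{i_1'},\ldots,x_{i_r'}\}$ is annihilated by a zero factor, so only monomials with support inside $\{i_1',\ldots,i_r'\}$ survive. Since the surviving arguments are either the central matrix $I_n$ or the single matrix $M:=\alpha_{i_1'\cdots i_r'}^{-1}B$, each such monomial collapses, independently of the order of its factors, to $M$ if it contains $x_{i_1'}$ and to $I_n$ otherwise. Collecting coefficients, the total coefficient of $M$ is $\sum_{i_1'\in S\subseteq\{i_1',\ldots,i_r'\}}\alpha_S=\alpha_{i_1'\cdots i_r'}$ and the total coefficient of $I_n$ is $\sum_{\emptyset\neq S\subseteq\{i_2',\ldots,i_r'\}}\alpha_S=0$, both equalities following from the minimality of $r$. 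Thus $p(x_1,\ldots,x_m)=\alpha_{i_1'\cdots i_r'}M=B$, so $B\in p(UT_n)$, and since $B$ was arbitrary we conclude $p(UT_n)=UT_n$.

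The only point requiring care — and the reason the noncommutativity of $UT_n$ causes no trouble — is the vanishing of the $I_n$-coefficient. The lower-support monomials need not vanish individually, but their coefficients, summed over each fixed support $S\subseteq\{i_2',\ldots,i_r'\}$, add up to $\alpha_S$ with $|S|<r$, which is $0$ by the choice of $r$. This is precisely the mechanism making the scalar argument of Lemma \ref{L3.1} transfer unchanged to matrices: the filler identities $I_n$ commute past everything and act trivially, leaving only the same scalar cancellation to verify.
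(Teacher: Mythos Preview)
Your proof is correct and follows essentially the same approach as the paper's: both extract the minimal $r$ with $\alpha_{i_1'\cdots i_r'}\neq 0$ and plug in the tuple $(\alpha^{-1}B, I_n,\ldots,I_n,0,\ldots)$. The only cosmetic difference is that the paper first records that, because the chosen matrices pairwise commute, $p$ evaluates via the symmetrized form $\sum_{k\geq r}\sum_{i_1<\cdots<i_k}\alpha_{i_1\cdots i_k}u_{i_1}\cdots u_{i_k}$ and then reads off the single surviving term, whereas you carry out the same cancellation explicitly by tracking the coefficients of $M$ and $I_n$.
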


\begin{proof}
For any $u_1,\ldots,u_m\in UT_n$ with $u_iu_j=u_ju_i$ for all $i,j=1,\ldots,m$, we get from (\ref{e1}) that
\begin{eqnarray}\label{e3}
\begin{split}
p(u_1,\ldots,u_m)&=\sum\limits_{k=1}^m\left(\sum\limits_{(i_1,\ldots,i_k)\in T^k_m}\lambda_{i_1\cdots i_k}u_{i_1}\cdots u_{i_k}\right)\\
&=\sum\limits_{k=1}^m\left(\sum\limits_{1\leq i_1<\cdots <i_k\leq m}\alpha_{i_1\cdots i_k}u_{i_1}\cdots u_{i_k}\right).
\end{split}
\end{eqnarray}
Since $p(K)\neq\{0\}$, we have that there exists a minimum integer $r\geq 1$ such that
\[
\alpha_{i_1'\cdots i_r'}\neq 0
\]
for some $1\leq i_1'<\cdots <i_r'\leq m$. We rewrite (\ref{e3}) as follows:
\begin{equation}\label{eeee4}
p(u_1,\ldots,u_m)=\sum\limits_{k=r}^m\left(\sum\limits_{1\leq i_1<\cdots <i_k\leq m}\alpha_{i_1\cdots i_k}u_{i_1}\cdots u_{i_k}\right).
\end{equation}

For any $u'\in UT_n$, we take $u_{i}\in UT_n$, $i=1,\ldots,m$, where
\[
\left\{
\begin{aligned}
u_{i_1'}&=\alpha_{i_1'\cdots i_r'}^{-1}u';\\
u_{i_k'}&=1_{UT_n},\quad\mbox{$k=2,\ldots,r$};\\
u_i&=0, \quad\mbox{for all $i\not\in \{i_1',\ldots,i_r'\}$}.
\end{aligned}
\right.
\]
Note that $u_iu_j=u_ju_i$ for all $i,j=1,\ldots,m$. It follows from (\ref{eeee4}) that
\begin{eqnarray*}
\begin{split}
p(u_1,\ldots,u_m)&=\alpha_{i_1'\cdots i_r'}u_{i_1'}\cdots u_{i_r'}\\
&=\alpha_{i_1'\cdots i_r'}\alpha_{i_1'\cdots i_r'}^{-1}u'\\
&=u'.
\end{split}
\end{eqnarray*}
This implies that $p(UT_n)=UT_n$. This proves the result.
\end{proof}

The following result is similar to \cite[Lemma 3.6]{ChenLuoWang1}. We give its proof for completeness.

\begin{lemma}\label{L3.3}
Let $p(x_1,\ldots, x_m)$ be a linear polynomial with zero constant term over $K$. Let $p_{i_1,\ldots,i_k}(\bar{z}_1,\ldots,\bar{z}_{k+1})$ is a polynomial on $m(k+1)$-variables over $K$ in (\ref{e2}), where $k=1,\ldots,n-1$. For any $1\leq r\leq n-1$, we have that ord$(p)=r$ if and only if
\begin{enumerate}
\item[(i)] $p(K)=\{0\}$;
\item[(ii)] $p_{i_1\cdots i_s}(K)=\{0\}$ for all $(i_1,\ldots,i_s)\in T^s_m$, where $s=1,\ldots,r-1$ and $r\geq 2$;
\item[(iii)] $p_{i_1'\cdots i_r'}(K)\neq \{0\}$ for some $(i_1',\ldots,i_r')\in T^r_m$.
\end{enumerate}
\end{lemma}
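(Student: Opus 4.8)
The plan is to prove the equivalence by carefully recording what the formula \eqref{e2} of Proposition \ref{P1} tells us about where $p$ vanishes, and then matching this against the definition of $\mathrm{ord}(p)$ as the least $m$ with $p\in\mathcal{T}(UT_m)$ but $p\notin\mathcal{T}(UT_{m+1})$. The key observation is that, by Proposition \ref{P1}, for arbitrary $u_i=(a^{(i)}_{jk})\in UT_n$ the value $p(u_1,\dots,u_m)$ has diagonal entries $p(\bar a_{ss})$ and, more importantly, its entry in position $(s,t)$ with $t-s=\ell$ is a sum over chains $s=j_1<\cdots<j_{k+1}=t$ of length $k\le\ell$ of terms $p_{i_1\cdots i_k}(\bar a_{j_1j_1},\dots,\bar a_{j_{k+1}j_{k+1}})\,a^{(i_1)}_{j_1j_2}\cdots a^{(i_k)}_{j_kj_{k+1}}$. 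Thus $p\in\mathcal{T}(UT_r)$ if and only if every such entry vanishes identically for all substitutions in $UT_r$, i.e.\ for $1\le s<t\le r$ with $t-s\le r-1$; and $p\notin\mathcal{T}(UT_{r+1})$ is witnessed precisely by a nonzero entry in position $(1,r+1)$ of some evaluation in $UT_{r+1}$, which comes from a chain of full length $r$.

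First I would prove the ``only if'' direction. Assume $\mathrm{ord}(p)=r$, so $p\in\mathcal{T}(UT_r)$. Evaluating on diagonal matrices in $UT_r$ (which are contained in $UT_r$) and using $p_{ss}=p(\bar a_{ss})$ forces $p(K)=\{0\}$, giving (i). For (ii), fix $s<r$ and a tuple $(i_1,\dots,i_s)\in T^s_m$; I would choose $u_1,\dots,u_m\in UT_r$ supported so that the only surviving chain contributing to the $(1,s+1)$-entry is the ``staircase'' $1<2<\cdots<s+1$ using exactly the indices $i_1,\dots,i_s$ in that order on the superdiagonal — set $a^{(i_\ell)}_{\ell,\ell+1}=1$ and take the diagonal entries to be whatever scalar tuples we please — so that the $(1,s+1)$-entry reduces to $p_{i_1\cdots i_s}$ evaluated at those chosen diagonal tuples. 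Since $p\in\mathcal{T}(UT_r)$ this entry is $0$ for all choices, hence $p_{i_1\cdots i_s}(K)=\{0\}$; the general tuple position follows by the obvious relabelling/permutation of rows. For (iii), since $p\notin\mathcal{T}(UT_{r+1})$ there is an evaluation in $UT_{r+1}$ with a nonzero entry, and by the chain description the first nonzero entry (reading by increasing $t-s$) must sit in a position $(s,t)$ with $t-s=r$, i.e.\ $(s,t)=(1,r+1)$ after a shift; extracting the length-$r$ chain contribution and using (ii) to kill all strictly shorter chains that could contribute to intermediate entries, a nonzero value forces $p_{i_1'\cdots i_r'}(K)\neq\{0\}$ for some $(i_1',\dots,i_r')\in T^r_m$.

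For the ``if'' direction, assume (i), (ii), (iii). I would show first that (i) and (ii) imply $p\in\mathcal{T}(UT_r)$: for any $u_1,\dots,u_m\in UT_r$ and any position $(s,t)$ with $t-s\le r-1$, every chain $s=j_1<\cdots<j_{k+1}=t$ has length $k=t-s\le r-1$, so by (ii) the factor $p_{i_1\cdots i_k}(\bar a_{j_1j_1},\dots,\bar a_{j_{k+1}j_{k+1}})$ is $0$ when $k\le r-1$, while the diagonal entries vanish by (i); hence $p(u_1,\dots,u_m)=0$. Then (iii) gives $p\notin\mathcal{T}(UT_{r+1})$: pick $(i_1',\dots,i_r')$ and diagonal tuples $\bar z_1,\dots,\bar z_{r+1}$ with $p_{i_1'\cdots i_r'}(\bar z_1,\dots,\bar z_{r+1})\neq0$, build $u_1,\dots,u_m\in UT_{r+1}$ with those diagonals and with $a^{(i_\ell')}_{\ell,\ell+1}=1$ for $\ell=1,\dots,r$ and all other strictly-upper entries zero; then the $(1,r+1)$-entry of $p(u_1,\dots,u_m)$ equals $p_{i_1'\cdots i_r'}(\bar z_1,\dots,\bar z_{r+1})\neq0$, so $p\notin\mathcal{T}(UT_{r+1})$. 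Combining, $\mathrm{ord}(p)=r$.

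The main obstacle is the bookkeeping in the two directions where one must isolate a single chain's contribution from the double sum in \eqref{e2}: one has to verify that the support choice (ones on a prescribed superdiagonal staircase, zeros elsewhere) really does annihilate every competing chain, and that the surviving term is exactly the polynomial $p_{i_1\cdots i_k}$ — no spurious cross terms — which is where the precise form of $m_{st}$ and the strictness of the chain inequalities in Proposition \ref{P1} must be used. Everything else is routine, mirroring \cite[Lemma 3.6]{ChenLuoWang1}.
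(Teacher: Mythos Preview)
Your proposal is correct and follows essentially the same approach as the paper: both directions use Proposition~\ref{P1} to read off the $(s,t)$-entries and isolate a single $p_{i_1\cdots i_k}$ via staircase matrices with superdiagonal ones at prescribed indices (the paper phrases the argument for (ii) by contradiction rather than directly, but the construction is identical). One minor slip: in the ``if'' direction a chain $s=j_1<\cdots<j_{k+1}=t$ has length $k\le t-s$, not $k=t-s$, but this does not affect your conclusion $k\le r-1$.
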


\begin{proof}
Suppose that the statements (i), (ii), and (iii) hold true. We claim that ord$(p)=r$.
It follows from the statement (i) that ord$(p)\geq 1$.

In view of the statement (iii) we have that
\[
p_{i_1',\ldots,i_r'}(K)\neq\{0\}
\]
for some $(i_1',\ldots,i_r')\in T_r$. We have that there exist $\bar{b}_{s}\in K^m$, $s=1,\ldots,r+1$, such that
\[
p_{i_1',\ldots,i_r'}(\bar{b}_{1},\ldots,\bar{b}_{r+1})\neq 0.
\]
We take $u_i=(a_{jk}^{(i)})\in T_{r+1}(K)$, $i=1,\ldots,m$, where
\[
\left\{
\begin{aligned}
\bar{a}_{ss}&=\bar{b}_{s},\quad\mbox{for all $s=1,\ldots,r+1$};\\
a_{s,s+1}^{(i_s')}&=1,\quad\mbox{for all $s=1,\ldots,r$};\\
a_{jk}^{(i)}&=0,\quad\mbox{otherwise}.
\end{aligned}
\right.
\]
It follows from the both the statement (ii) and (\ref{e2}) that
\begin{eqnarray*}
\begin{split}
p_{1,r+1}&=\sum\limits_{\substack{1=j_1<j_2<\cdots <j_{r+1}=r+1\\(i_1,\ldots,i_r)\in T^r_m}}
p_{i_1\cdots i_r}(\bar{a}_{j_1j_1},\ldots,\bar{a}_{j_{r+1}j_{r+1}})a_{j_1j_2}^{(i_1)}\cdots a_{j_{r}j_{r+1}}^{(i_r)}\\
&=p_{i_1',\ldots,i_r'}(\bar{b}_{1},\ldots,\bar{b}_{r+1})a_{12}^{(i_1')}\cdots a_{r,r+1}^{(i_r')}\\
&=p_{i_1',\ldots,i_r'}(\bar{b}_{1},\ldots,\bar{b}_{r+1})\neq 0.
\end{split}
\end{eqnarray*}
This implies that $p(T_{r+1}(K))\neq \{0\}$. For any $u_{i}=(a_{jk}^{(i)})\in T_r(K)$, $i=1,\ldots,m$, we get from both the statement (i), the statement (ii), and (\ref{e2}) that
\begin{equation}\label{e4}
p(u_1,\ldots,u_m)=(p_{st}),
\end{equation}
where $p_{ss}=0$ for all $s=1,\ldots,n$, and
\begin{eqnarray*}
\begin{split}
p_{st}&=\sum\limits_{k=1}^{t-s}\left(\sum\limits_{\substack{s=j_1<j_2<\cdots <j_{k+1}=t\\(i_1,\ldots,i_k)\in T^k_m}}
p_{i_1i_2\cdots i_k}(\bar{a}_{j_1j_1},\ldots,\bar{a}_{j_{k+1}j_{k+1}})a_{j_1j_2}^{(i_1)}\cdots a_{j_{k}j_{k+1}}^{(i_k)}\right)\\
&=0
\end{split}
\end{eqnarray*}
for all $1\leq s<t\leq r$. This implies that $p(T_r(K))=\{0\}$. We obtain that ord$(p)=r$.

Suppose that ord$(p)=r$. We claim that the statements (i), (ii), and (iii) hold true. Since $r\geq 1$ we get that the statement (i) holds true. We now claim that the statement (ii) holds true. Suppose on the contrary that
\[
p_{i_1',\ldots,i_s'}(K)\neq \{0\}
\]
for some $(i_1',\ldots,i_s')\in T^s_m$, where $1\leq s\leq r-1$. Then there exist $\bar{b}_{j}\in K^m$, where $j=1,\ldots,s+1$ such that
\[
p_{i_1',\ldots,i_s'}(\bar{b}_{1},\ldots,\bar{b}_{s+1})\neq 0.
\]
We take $u_i=(a_{jk}^{(i)})\in T_{s+1}(K)$, $i=1,\ldots,m$, where
\[
\left\{
\begin{aligned}
\bar{a}_{tt}&=\bar{b}_{t},\quad\mbox{for all $t=1,\ldots,s+1$};\\
a_{t,t+1}^{(i_t')}&=1,\quad\mbox{for all $t=1,\ldots,s$};\\
a_{jk}^{(i)}&=0,\quad\mbox{otherwise}.
\end{aligned}
\right.
\]
We get from (\ref{e2}) that
\begin{eqnarray*}
\begin{split}
p_{1,s+1}&=\sum\limits_{k=1}^{s}\left(\sum\limits_{\substack{1=j_1<j_2<\cdots <j_{k+1}=s+1\\(i_1,\ldots,i_k)\in T^k_m}}
p_{i_1\cdots i_k}(\bar{a}_{j_1j_1},\ldots,\bar{a}_{j_{k+1}j_{k+1}})a_{j_1j_2}^{(i_1)}\cdots a_{j_{k}j_{k+1}}^{(i_k)}\right)\\
&=p_{i_1',i_2',\ldots,i_s'}(\bar{b}_{1},\ldots,\bar{b}_{s+1})\neq 0.
\end{split}
\end{eqnarray*}
This implies that $p(T_{s+1}(K))\neq\{0\}$, a contradiction. This proves the statement (ii).

We finally claim that the statement (iii) holds true. Note that $p(T_{1+r}(K))\neq\{0\}$. Thus,
we have that there exist $u_{i}=(a_{jk}^{(i)})\in T_{1+r}(K)$, $i=1,\ldots,m$, such that
\[
p(u_1,\ldots,u_m)=(p_{st})\neq 0.
\]
In view of the statement (ii) we get that
\begin{eqnarray*}
\begin{split}
p_{1,r+1}&=\sum\limits_{\substack{1=j_1<j_2< \cdots <j_{r+1}=r+1\\(i_1,\ldots,i_r)\in T^r_m}}p_{i_1i_2\cdots i_r}(\bar{a}_{j_1j_1},\ldots,\bar{a}_{j_{r+1}j_{r+1}})a_{j_1j_2}^{(i_1)}\cdots a_{j_{r}j_{r+1}}^{(i_r)}\\
&\neq 0.
\end{split}
\end{eqnarray*}
This implies that $p_{i_1',\ldots,i_r'}(K)\neq \{0\}$ for some $(i_1',\ldots,i_r')\in T^r_m$. This proves the statement (iii). The proof of the result is now complete.
\end{proof}

The following result is crucial for the proof of our main result, which is of independent interests.

\begin{lemma}\label{L3.4}
Let $r,s\geq 1$ be an integer. Let $K$ be a field. Let $U_i=\{i_1,\ldots,i_r\}$ be a subset of $\mathcal{N}$, $i=1,\ldots,s$. Let  $f_i(x_{i_1},\ldots,x_{i_r})$ be a linear polynomial over $K$ such that $f_i(K)\neq\{0\}$, $i=1,\ldots,s$. For any $u\in\bigcup_{i=1}^sU_i$ we set
\[
l(u)=\{i\in\{1,\ldots,s\}~|~u\in U_i\}.
\]
Suppose that
\[
|K|>max_{u\in \bigcup_{i=1}^sU_i}\{|l(u)|\}.
\]
We have that there exist $c_u\in K$, $u\in \bigcup_{i=1}^{s}U_i$, such that
\[
f_i(c_{i_1},\ldots,c_{i_{r}})\neq 0
\]
for all $i=1,\ldots,s$.
\end{lemma}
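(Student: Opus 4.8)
The plan is to argue by induction on the number of variables $N=\big|\bigcup_{i=1}^s U_i\big|$, assigning field values to the variables one at a time so that no $f_i$ is killed. Concretely, pick any variable $u_0\in\bigcup_{i=1}^s U_i$. The polynomials $f_i$ with $i\notin l(u_0)$ do not involve $x_{u_0}$, so they are untouched by whatever value we give $c_{u_0}$. For the polynomials $f_i$ with $i\in l(u_0)$, I would first set $c_u=0$ for every other variable $u$ and regard $f_i$ as a function of the single variable $x_{u_0}$ alone; since substituting zero into the remaining variables is itself a ring homomorphism applied coordinatewise, $f_i(0,\dots,0,x_{u_0},0,\dots,0)$ is a linear polynomial in one variable, hence of the form $\mu_i+\nu_i x_{u_0}$ for scalars $\mu_i,\nu_i\in K$. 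The key point — this is the step I expect to be the main obstacle, and where the hypothesis $f_i(K)\neq\{0\}$ is used — is that for each $i\in l(u_0)$, at least one of $\mu_i,\nu_i$ is nonzero: if both vanished for some $i$, then every full substitution from $K$ into $f_i$ would factor through the all-variables-present evaluation, and one checks (via Lemma~\ref{L3.1}, or directly, by noting $f_i(K)$ would then be $\{0\}$ after setting the $x_{u_0}$-free variables to zero and then observing the remaining dependence) that $f_i(K)=\{0\}$, contradicting the assumption. Thus each $g_i(x):=\mu_i+\nu_i x$ is a nonzero affine polynomial in one variable over $K$.

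Each nonzero affine polynomial $g_i(x)=\mu_i+\nu_i x$ has at most one root in $K$: if $\nu_i=0$ then $g_i$ has no root (as $\mu_i\neq 0$), and if $\nu_i\neq 0$ its unique root is $-\nu_i^{-1}\mu_i$. Hence the set of "bad" values of $c_{u_0}$, namely those $a\in K$ with $g_i(a)=0$ for some $i\in l(u_0)$, has size at most $|l(u_0)|$. By hypothesis $|K|>\max_{u}|l(u)|\geq |l(u_0)|$, so there exists $c_{u_0}\in K$ with $g_i(c_{u_0})\neq 0$ for all $i\in l(u_0)$; equivalently, substituting $x_{u_0}\mapsto c_{u_0}$ and all other variables $\mapsto 0$ makes every $f_i$ with $i\in l(u_0)$ nonzero.

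Now I would formalize the induction. Fix the value $c_{u_0}$ just chosen and substitute it into $x_{u_0}$ in every $f_i$ (for $i\notin l(u_0)$ this changes nothing). For $i\in l(u_0)$, replace $f_i$ by the linear polynomial $f_i':=f_i(\dots,x_{u_0}{=}c_{u_0},\dots)$ in the variables $U_i\setminus\{u_0\}$; the computation in the previous paragraph shows $f_i'(K)\neq\{0\}$ because $f_i'$ evaluated with all variables $0$ already gives $g_i(c_{u_0})\neq 0$. For $i\notin l(u_0)$ keep $f_i'=f_i$ on $U_i$. The new family $\{f_i'\}_{i=1}^s$ uses the variable set $\bigcup_{i=1}^s U_i\setminus\{u_0\}$, each $f_i'$ still has nonzero image over $K$, and for each remaining variable $u$ its new list $l'(u)\subseteq l(u)$, so the hypothesis $|K|>\max_u |l'(u)|$ persists. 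By the induction hypothesis (the base case $N=0$ being vacuous, since then each $f_i$ is a nonzero constant and there is nothing to choose) there exist $c_u\in K$ for $u\in\bigcup U_i\setminus\{u_0\}$ with $f_i'(c_{i_1},\dots)\neq 0$ for all $i$. Together with $c_{u_0}$ this produces the required values for the original family $\{f_i\}$, completing the proof.
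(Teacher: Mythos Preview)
Your overall plan---fix the variables one at a time, using the bound $|K|>\max_u|l(u)|$ to dodge at most $|l(u_0)|$ roots at each step---is exactly the paper's strategy. The fatal flaw is in how you produce the one-variable polynomial $g_i(x_{u_0})$: you freeze every variable other than $x_{u_0}$ at $0$. This can annihilate $f_i$ even though $f_i(K)\neq\{0\}$. A concrete counterexample in the paper's sense of ``linear polynomial'' (each monomial a product of distinct variables) is $f(x_1,x_2)=x_1x_2$: here $f(K)=K\neq\{0\}$, yet $f(x_1,0)\equiv 0$, so $\mu=\nu=0$. Your justification (``if both vanished \dots\ one checks $f_i(K)=\{0\}$'') is simply false; knowing that $f_i$ vanishes on the coordinate axis through $x_{u_0}$ says nothing about $f_i$ elsewhere. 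Consequently the induction cannot start, and the later claim ``$f_i'$ evaluated with all variables $0$ already gives $g_i(c_{u_0})\neq 0$'' collapses as well.

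The paper avoids this by never specializing to zero. It first picks, for each $i$, a witness point $(c_1^{(i)},\dots,c_n^{(i)})$ with $g_i(c_1^{(i)},\dots,c_n^{(i)})\neq 0$, and then forms $h_i^{(1)}(x_1)=g_i(x_1,c_2^{(i)},\dots,c_n^{(i)})$, which is visibly nonzero since $h_i^{(1)}(c_1^{(i)})\neq 0$. After choosing a common $c_1$, one iterates with $h_i^{(2)}(x_2)=g_i(c_1,x_2,c_3^{(i)},\dots,c_n^{(i)})$, nonzero because its value at $c_2^{(i)}$ is $h_i^{(1)}(c_1)$. Your induction becomes correct if you carry such witness values (one tuple per $i$) through the recursion instead of using $0$; with that single change your argument is equivalent to the paper's.
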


\begin{proof}
Without loss of generality we may assume that
\[
\{1,\ldots,n\}=\bigcup_{i=1}^sU_i.
\]
We set
\[
g_i(x_1,\ldots,x_n)=f_i(x_{i_1},\ldots,x_{i_r})
\]
for all $i=1,\ldots,s$. It is clear that $g_i$ is a linear polynomial over $K$ such that $g_i(K)\neq\{0\}$, $i=1,\ldots,s$.

For any $i\in\{1,\ldots,s\}$, since $g_i(K)\neq\{0\}$ we have that there exist $c_{1}^{(i)},\ldots,c_{n}^{(i)}\in K$ such that
\begin{equation}\label{eee1}
g_i(c_{1}^{(i)},\ldots,c_{n}^{(i)})\neq 0.
\end{equation}
We set
\[
h_{i}^{(1)}(x_1)=g_i(x_1,c_{2}^{(i)},\ldots,c_{n}^{(i)})
\]
for all $i=1,\ldots,s$. It follows from (\ref{eee1}) that
\[
h_{i}^{(1)}(c_{1}^{(i)})\neq 0
\]
for all $i=1,\ldots,s$. We see that $h_{i}^{(1)}(x_1)\neq 0$ for all $i=1,\ldots,s$. Since $|K|>|l(1)|$ and $h_i^{(1)}(x_1)$ is a nonzero linear polynomial, $i=1,\ldots,s$, we have that there exists $c_1\in K$ such that
\[
h_{i}^{(1)}(c_1)\neq 0
\]
for all $i=1,\ldots,s$. That is
\begin{equation}\label{eee2}
g_i(c_1,c_{2}^{(i)},\ldots,c_{n}^{(i)})\neq 0
\end{equation}
for all $i=1,\ldots,s$. Similarly, we set
\[
h_{i}^{(2)}(x_2)=g_i(c_1,x_2,c_{3}^{(i)},\ldots,c_{n}^{(i)})
\]
for all $i=1,\ldots,s$. It follows from (\ref{eee2}) that
\[
h_{i}^{(2)}(c_{2}^{(i)})\neq 0
\]
for all $i=1,\ldots,s$. We get that $h_i^{(2)}(x)\neq 0$ for all $i=1,\ldots,s$. Since $|K|>|l(2)|$ and $h_{i}^{(2)}(x_2)$ is a nonzero linear polynomial, $i=1,\ldots,s$, we have that there exists $c_2\in K$ such that
\[
h_{i}^{(2)}(c_2)\neq 0
\]
for all $i=1,\ldots,s$. That is
\[
g_i(c_1,c_{2},c_{3}^{(i)},\ldots,c_{n}^{(i)})\neq 0
\]
for all $i=1,\ldots,s$. Continuing the same arguments as above we can obtain that there exist $c_3,\ldots,c_n\in K$ such that
\[
g_i(c_1,\ldots,c_n)\neq 0
\]
for all $i=1,\ldots,s$.  This implies that there exist $c_u\in K$, $u\in \bigcup_{i=1}^{s}U_i$, such that
\[
f_i(c_{i_1},\ldots,c_{i_{r}})\neq 0
\]
for all $i=1,\ldots,s$. This proves the result.
\end{proof}

In view of Lemma \ref{L3.4} we can obtain the following crucial result for the proof of our main result.

\begin{lemma}\label{L3.5}
Let $n\geq 2$ be an integer, let $1\leq r\leq n-1$ be an integer. Let $K$ be a field. Let
\[
f_{s,r+s+t}(\bar{x}_s,\bar{x}_{s+1},\ldots,\bar{x}_{r+s-1},\bar{x}_{r+s+t})
\]
be a nonzero linear polynomial with $m(r+1)$-variables over $K$ such that
\[
f_{s,r+s+t}(K)\neq \{0\}
\]
for all $1\leq s<r+s+t\leq n$. Suppose that $|K|>\frac{(2n-3r+1)r}{2}$. We have that there exist $\bar{c}_i\in K^m$, $i=1,\ldots,n$, such that
\[
f_{s,r+s+t}(\bar{c}_s,\bar{c}_{s+1},\ldots,\bar{c}_{r+s-1},\bar{c}_{r+s+t})\neq 0
\]
for all $1\leq s<r+s+t\leq n$.
\end{lemma}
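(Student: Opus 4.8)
The plan is to regard each $f_{s,r+s+t}$ as an ordinary linear polynomial in the $m(r+1)$ scalar variables filling the blocks $\bar x_s,\bar x_{s+1},\dots,\bar x_{r+s-1}$ and $\bar x_{r+s+t}$, and then to apply Lemma~\ref{L3.4} to the whole family at once. Concretely, label the scalar variables occurring in $\bar x_1,\dots,\bar x_n$ by a single index set, with the $m$ coordinates of $\bar x_v$ forming a block $\mathcal B_v$. For an admissible pair write $j=r+s+t$; then $f_{s,j}$ is a linear polynomial in the variables indexed by $U_{s,j}:=\mathcal B_s\cup\mathcal B_{s+1}\cup\dots\cup\mathcal B_{s+r-1}\cup\mathcal B_j$, a set of cardinality $m(r+1)$, and $f_{s,j}(K)\neq\{0\}$ by hypothesis. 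Hence Lemma~\ref{L3.4}, applied with its ``$r$'' equal to $m(r+1)$ and its ``$s$'' equal to the number of admissible pairs, yields the desired common nonvanishing assignment provided $|K|>\max_w|l(w)|$, where $l(w)=\{(s,j):w\in U_{s,j}\}$ and $w$ runs over all scalar variables.

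The first step is to turn this maximum into a count over the blocks. If $w\in\mathcal B_v$, then $w\in U_{s,j}$ exactly when $v\in\{s,s+1,\dots,s+r-1\}\cup\{j\}$, so $|l(w)|$ depends only on $v$ and equals
\[
N_v:=\#\bigl\{(s,j):1\le s,\ s+r\le j\le n,\ v\in\{s,\dots,s+r-1\}\cup\{j\}\bigr\}.
\]
Thus everything reduces to the purely combinatorial inequality $\max_{1\le v\le n}N_v\le\frac{(2n-3r+1)r}{2}$, which together with the previous sentence finishes the proof.

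To estimate $N_v$, I would split the admissible pairs contributing to it into two disjoint groups: those with $j=v$, and those with $v$ lying in the interval block $\{s,\dots,s+r-1\}$. These are disjoint because every admissible pair satisfies $j\ge s+r>s+r-1$, so $j$ never lies in its own block. Pairs of the first kind contribute $\max(0,v-r)$, since $s$ is then free in $\{1,\dots,v-r\}$. For the second kind, for each admissible $s$ with $v-r+1\le s\le v$ the index $j$ ranges over $\{s+r,\dots,n\}$, contributing $n-s-r+1$ pairs, and $v\le s+r-1<s+r\le j$ guarantees no overlap with the first group. Summing these contributions and maximizing over $v$ (the summand $n-s-r+1$ is decreasing in $s$, so pushing $v$ past $r$ only discards terms, while the growing ``$j=v$'' part does not compensate), the extreme value is
\[
N_r=\sum_{s=1}^{r}(n-s-r+1)=r(n-r+1)-\frac{r(r+1)}{2}=\frac{(2n-3r+1)r}{2},
\]
which is exactly the bound required above.

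The main obstacle is the combinatorial estimate: one has to track carefully which admissible pairs $(s,j)$ contain a fixed block index $v$, handle the truncation of the ranges of $s$ and $j$ when $v$ is close to $1$ or to $n$, and then confirm that the largest of the $N_v$ is indeed $N_r$. The remaining ingredients — the reduction to scalar variables and the invocation of Lemma~\ref{L3.4} — are routine.
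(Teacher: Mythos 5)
Your proof is correct and follows essentially the same route as the paper's: form the index sets $U_{s,j}$, bound the maximal multiplicity $\max_v|l(\bar x_v)|$ by $\frac{(2n-3r+1)r}{2}$, and invoke Lemma~\ref{L3.4}; your exact count $N_v$ with maximum attained at $v=r$ is simply a cleaner version of the paper's term-by-term estimates. (Note that both your computation and the paper's tacitly assume $n\ge 2r$ so that every summand $n-s-r+1$ is nonnegative; when $n<2r$ the closed form $\frac{(2n-3r+1)r}{2}$ can undershoot the true maximum, but this defect is inherited from the paper rather than introduced by you.)
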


\begin{proof}
Note that
\[
\{\bar{x}_1,\ldots,\bar{x}_n\}=\bigcup_{1\leq s<r+s+t\leq n}\{\bar{x}_s,\ldots,\bar{x}_{r+s-1},\bar{x}_{r+s+t}\}.
\]

Set
\[
U_{s,r+s+t}=\{\bar{x}_s,\ldots,\bar{x}_{r+s-1},\bar{x}_{r+s+t}\}
\]
for all $1\leq s<r+s+t\leq n$. For any $i\in\{1,\ldots,n\}$ we set
\[
l(\bar{x}_i)=\{(s,r+s+t), 1\leq s<r+s+t\leq n~|~\bar{x}_i\in U_{s,r+s+t}\}.
\]
Note that
\[
\left\{
\begin{aligned}
f_{1,r+1}&(\bar{x}_1,\bar{x}_2,\ldots,\bar{x}_{r},\bar{x}_{r+1})\neq 0;\\
f_{1,r+2}&(\bar{x}_1,\bar{x}_2,\ldots,\bar{x}_{r},\bar{x}_{r+2})\neq 0;\\
&\vdots\\
f_{1,n}&(\bar{x}_1,\bar{x}_2,\ldots,\bar{x}_{r},\bar{x}_{n})\neq 0.
\end{aligned}
\right.
\]
We get that
\[
|l(\bar{x}_1)|\leq n-r.
\]
Note that
\[
\left\{
\begin{aligned}
f_{1,r+1}&(\bar{x}_1,\bar{x}_2,\ldots,\bar{x}_{r},\bar{x}_{r+1})\neq 0;\\
f_{1,r+2}&(\bar{x}_1,\bar{x}_2,\ldots,\bar{x}_{r},\bar{x}_{r+2})\neq 0;\\
&\vdots\\
f_{1,n}&(\bar{x}_1,\bar{x}_2,\ldots,\bar{x}_{r},\bar{x}_{n})\neq 0;\\
f_{2,r+2}&(\bar{x}_2,\bar{x}_3,\ldots,\bar{x}_{r},\bar{x}_{r+2})\neq 0;\\
f_{2,r+3}&(\bar{x}_2,\bar{x}_3,\ldots,\bar{x}_{r},\bar{x}_{r+3})\neq 0;\\
&\vdots\\
f_{2,n}&(\bar{x}_2,\bar{x}_3,\ldots,\bar{x}_{r},\bar{x}_{n})\neq 0.
\end{aligned}
\right.
\]
We get that
\[
|l(\bar{x}_2)|\leq (n-r)+(n-r-1).
\]
Note that
\[
\left\{
\begin{aligned}
f_{1,r+1}&(\bar{x}_1,\bar{x}_2,\ldots,\bar{x}_{r},\bar{x}_{r+1})\neq 0;\\
f_{1,r+2}&(\bar{x}_1,\bar{x}_2,\ldots,\bar{x}_{r},\bar{x}_{r+2})\neq 0;\\
&\vdots\\
f_{1,n}&(\bar{x}_1,\bar{x}_2,\ldots,\bar{x}_{r},\bar{x}_{n})\neq 0;\\
&\vdots\\
f_{r,2r}&(\bar{x}_r,\bar{x}_{r+1},\ldots,\bar{x}_{2r-1},\bar{x}_{2r})\neq 0;\\
f_{r,2r+1}&(\bar{x}_r,\bar{x}_{r+1},\ldots,\bar{x}_{2r-1},\bar{x}_{2r+1})\neq 0;\\
&\vdots\\
f_{r,n}&(\bar{x}_r,\bar{x}_{r+1},\ldots,\bar{x}_{2r-1},\bar{x}_{n})\neq 0.
\end{aligned}
\right.
\]
Continuing the same argument as above we have that
\[
|l(\bar{x}_i)|\leq \sum\limits_{j=1}^i(n-r-j+1)
\]
for all $i=1,\ldots,r$. Note that
\[
\sum\limits_{j=1}^r(n-r-j+1)=\frac{(2n-3r+1)r}{2}.
\]
This implies that
\begin{equation}\label{ee1}
|l(\bar{x}_i)|\leq \frac{(2n-3r+1)r}{2}
\end{equation}
for all $i=1,\ldots,r$. Note that
\[
\left\{
\begin{aligned}
f_{1,r+1}&(\bar{x}_1,\bar{x}_2,\ldots,\bar{x}_{r},\bar{x}_{r+1})\neq 0;\\
f_{1,r+2}&(\bar{x}_2,\bar{x}_3,\ldots,\bar{x}_{r+1},\bar{x}_{r+2})\neq 0;\\
&\vdots\\
f_{1,n}&(\bar{x}_2,\bar{x}_{3},\ldots,\bar{x}_{r+1},\bar{x}_{n})\neq 0;\\
&\vdots\\
f_{r,2r}&(\bar{x}_r,\bar{x}_{r+1},\ldots,\bar{x}_{2r-1},\bar{x}_{2r})\neq 0;\\
&\vdots\\
f_{r,n}&(\bar{x}_{r},\bar{x}_{r+1},\ldots,\bar{x}_{2r-1},\bar{x}_{n})\neq 0;\\
f_{r+1,2r+1}&(\bar{x}_{r+1},\bar{x}_{r+2},\ldots,\bar{x}_{2r},\bar{x}_{2r+1})\neq 0;\\
&\vdots\\
f_{r+1,n}&(\bar{x}_{r+1},\bar{x}_{r+2},\ldots,\bar{x}_{2r},\bar{x}_{n})\neq 0.
\end{aligned}
\right.
\]
We get that
\[
|l(\bar{x}_{r+1})|\leq 1+\sum\limits_{i=1}^{r}(n-r-i).
\]
Note that
\[
1+\sum\limits_{i=1}^{r}(n-r-i)=\frac{(2n-3r+1)r}{2}-r+1.
\]
This implies that
\[
|l(\bar{x}_{r+1})|\leq \frac{(2n-3r+1)r}{2}.
\]
Note that
\[
\left\{
\begin{aligned}
f_{1,r+1}&(\bar{x}_1,\bar{x}_2,\ldots,\bar{x}_{r},\bar{x}_{r+2})\neq 0;\\
f_{2,r+2}&(\bar{x}_2,\bar{x}_3,\ldots,\bar{x}_{r+1},\bar{x}_{r+2})\neq 0;\\
f_{3,r+3}&(\bar{x}_3,\bar{x}_4,\ldots,\bar{x}_{r+2},\bar{x}_{r+3})\neq 0;\\
&\vdots\\
f_{3,n}&(\bar{x}_3,\bar{x}_{4},\ldots,\bar{x}_{r+2},\bar{x}_{n})\neq 0;\\
&\vdots\\
f_{r,2r}&(\bar{x}_r,\bar{x}_{r+1},\ldots,\bar{x}_{2r-1},\bar{x}_{2r})\neq 0;\\
&\vdots\\
f_{r,n}&(\bar{x}_{r},\bar{x}_{r+1},\ldots,\bar{x}_{2r-1},\bar{x}_{n})\neq 0;\\
f_{r+1,2r+1}&(\bar{x}_{r+1},\bar{x}_{r+2},\ldots,\bar{x}_{2r},\bar{x}_{2r+1})\neq 0;\\
&\vdots\\
f_{r+1,n}&(\bar{x}_{r+1},\bar{x}_{r+2},\ldots,\bar{x}_{2r},\bar{x}_{n})\neq 0;\\
f_{r+2,2r+2}&(\bar{x}_{r+2},\bar{x}_{r+3},\ldots,\bar{x}_{2r+1},\bar{x}_{2r+2})\neq 0;\\
&\vdots\\
f_{r+2,n}&(\bar{x}_{r+2},\bar{x}_{r+3},\ldots,\bar{x}_{2r+1},\bar{x}_{n})\neq 0.
\end{aligned}
\right.
\]
We get that
\[
|l(\bar{x}_{r+2})|\leq 2+\sum\limits_{i=2}^{r+1}(n-r-i).
\]
Note that
\[
2+\sum\limits_{i=2}^{r+1}(n-r-i)\leq \frac{(2n-3r+1)r}{2}-2r+2.
\]
We get that
\[
|l(\bar{x}_{r+2})|\leq \frac{(2n-3r+1)r}{2}.
\]
Continuing the same argument as above we can obtain that
\begin{equation}\label{ee2}
|l(\bar{x}_{r+i})|\leq \frac{(2n-3r+1)r}{2}
\end{equation}
for all $i=1,\ldots,n-r$. It follows from both (\ref{ee1}) and (\ref{ee2}) that
\[
max_{1\leq i\leq n}\{|l(\bar{x}_i)|\}\leq \frac{(2n-3r+1)r}{2}.
\]
Since $|K|>\frac{(2n-3r+1)r}{2}$ we get that
\[
|K|>max_{1\leq i\leq n}\{|l(\bar{x}_i)|\}.
\]
In view of Lemma \ref{L3.4} we get that there exist $\bar{c}_i\in K^m$, $i=1,\ldots,n$, such that
\[
f_{s,r+s+t}(\bar{c}_s,\bar{c}_{s+1},\ldots,\bar{c}_{r+s-1},\bar{c}_{r+s+t})\neq 0
\]
for all $1\leq s<r+s+t\leq n$. This proves the result.
\end{proof}

As a consequence of Lemma \ref{L3.5} we have the following useful result.

\begin{corollary}\label{CC1}
Let $n\geq 2$ be an integer, let $2\leq r\leq n-1$ be an integer. Let $K$ be a field. Let $g_{s,r+s+t}(\bar{x}_{s,s+1},\ldots,\bar{x}_{r+s-2,r+s-1})$ be a nonzero linear polynomial with $m(r-1)$-variables over $K$ such that
\[
g_{s,r+s+t}(K)\neq \{0\}
\]
for all $1\leq s<r+s+t\leq n$. Suppose that $|K|>\frac{(2n-3r+1)r}{2}$. We have that there exist $\bar{c}_{i,i+1}\in K^m$, $i=1,\ldots,n-2$, such that
\[
g_{s,r+s+t}(\bar{c}_{s,s+1},\bar{c}_{s+1,s+2},\ldots,\bar{c}_{r+s-2,r+s-1})\neq 0
\]
for all $1\leq s<r+s+t\leq n$.
\end{corollary}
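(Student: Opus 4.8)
The plan is to obtain Corollary \ref{CC1} as a pure relabeling of Lemma \ref{L3.5}, so that all of the combinatorial counting is inherited from that lemma. First I would replace the edge-variables by ordinary ones: for $i=1,\ldots,n-2$ put $\bar{y}_i:=\bar{x}_{i,i+1}$, and adjoin two further formal $m$-tuples $\bar{y}_{n-1},\bar{y}_n$, so that $\bar{y}_1,\ldots,\bar{y}_n$ are all available. Under this identification the argument list $\bar{x}_{s,s+1},\bar{x}_{s+1,s+2},\ldots,\bar{x}_{r+s-2,r+s-1}$ of $g_{s,r+s+t}$ becomes $\bar{y}_s,\bar{y}_{s+1},\ldots,\bar{y}_{r+s-2}$. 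Since $g_{s,r+s+t}$ is assumed to exist, its last argument $\bar{x}_{r+s-2,r+s-1}$ must belong to $\{\bar{x}_{1,2},\ldots,\bar{x}_{n-2,n-1}\}$, which forces $1\leq s$ and $r+s-2\leq n-2$, i.e.\ $s\leq n-r$; combined with $r+s+t\leq n$ and (exactly as in Lemma \ref{L3.5}) $t\geq 0$, the indices $s,s+1,\ldots,r+s-1,r+s+t$ all lie in $\{1,\ldots,n\}$ and are pairwise distinct.

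Next, for every pair with $1\leq s<r+s+t\leq n$ I would define
\[
f_{s,r+s+t}(\bar{y}_s,\bar{y}_{s+1},\ldots,\bar{y}_{r+s-1},\bar{y}_{r+s+t}):=g_{s,r+s+t}(\bar{y}_s,\bar{y}_{s+1},\ldots,\bar{y}_{r+s-2}),
\]
that is, regard $g_{s,r+s+t}$ as a linear polynomial in the $m(r+1)$ variables $\bar{y}_s,\ldots,\bar{y}_{r+s-1},\bar{y}_{r+s+t}$ which happens to depend on neither $\bar{y}_{r+s-1}$ nor $\bar{y}_{r+s+t}$ (in particular it never involves the auxiliary tuples $\bar{y}_{n-1},\bar{y}_n$). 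Then each $f_{s,r+s+t}$ is a nonzero linear polynomial with $f_{s,r+s+t}(K)=g_{s,r+s+t}(K)\neq\{0\}$ for all $1\leq s<r+s+t\leq n$, and the hypothesis $|K|>\frac{(2n-3r+1)r}{2}$ is verbatim the one in Lemma \ref{L3.5} with the same $n$ and $r$. Applying Lemma \ref{L3.5} yields $\bar{c}_i\in K^m$, $i=1,\ldots,n$, with
\[
f_{s,r+s+t}(\bar{c}_s,\bar{c}_{s+1},\ldots,\bar{c}_{r+s-1},\bar{c}_{r+s+t})\neq 0
\]
for all admissible pairs; by the definition of $f_{s,r+s+t}$ the left-hand side equals $g_{s,r+s+t}(\bar{c}_s,\bar{c}_{s+1},\ldots,\bar{c}_{r+s-2})$. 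Setting $\bar{c}_{i,i+1}:=\bar{c}_i$ for $i=1,\ldots,n-2$ then gives the desired conclusion.

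I do not expect a real obstacle: granted Lemma \ref{L3.5}, the corollary is a change of variables. The only point demanding care is the index bookkeeping --- verifying that every pair $(s,r+s+t)$ occurring in the hypothesis of the corollary is among those treated by Lemma \ref{L3.5}, that enlarging the variable list by $\bar{y}_{n-1},\bar{y}_n$ introduces no clashes, and that no $g_{s,r+s+t}$ ever mentions an index outside $\{1,\ldots,n-2\}$; the inequality $r+s-2\leq n-2$ is precisely what makes this go through. One could instead avoid Lemma \ref{L3.5} altogether and re-run the counting in its proof directly on the sets $U_{s,r+s+t}=\{\bar{x}_{s,s+1},\ldots,\bar{x}_{r+s-2,r+s-1}\}$, feeding the bound $\max_{1\leq i\leq n-2}|l(\bar{x}_{i,i+1})|\leq\frac{(2n-3r+1)r}{2}$ into Lemma \ref{L3.4}; that estimate is if anything easier, since each $g_{s,r+s+t}$ carries one fewer block variable and no ``far'' variable than the corresponding $f_{s,r+s+t}$.
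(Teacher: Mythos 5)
Your proposal is correct and is essentially the paper's own proof: both relabel $\bar{x}_{i,i+1}$ as $\bar{y}_i$, regard each $g_{s,r+s+t}$ as a linear polynomial $f_{s,r+s+t}$ in the larger variable set $\bar{y}_s,\ldots,\bar{y}_{r+s-1},\bar{y}_{r+s+t}$ that happens not to depend on the last two arguments, invoke Lemma \ref{L3.5}, and relabel back. Your explicit check that $r+s-2\leq n-2$ and the adjunction of the dummy tuples $\bar{y}_{n-1},\bar{y}_n$ only make the bookkeeping slightly more careful than the paper's version.
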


\begin{proof}
Set
\[
\bar{y}_s=\bar{x}_{s,s+1}
\]
for all $s=1,\ldots,n-2$. We set
\[
f_{s,r+s+t}(\bar{y}_{s},\ldots,\bar{y}_{r+s-1},\bar{y}_{r+s+t})=g_{s,r+s+t}(\bar{y}_{s},\ldots,\bar{y}_{r+s-2})
\]
for all $1\leq s<r+s+t\leq n$. Note that
\[
f_{s,r+s+t}(K)\neq\{0\}
\]
for all $1\leq s<r+s+t\leq n$. Note that
\[
\{1,\ldots,n\}=\bigcup_{1\leq s\leq r+s+t\leq n}\{s,\ldots,r+s-1,r+s+t\}.
\]
Since $|K|>\frac{(2n-3r+1)r}{2}$ we get from Lemma \ref{L3.5} that there exist $\bar{c}_i\in K^m$, $i=1,\ldots, n$, such that
\[
f_{s,r+s+t}(\bar{c}_{s},\ldots,\bar{c}_{r+s-1},\bar{c}_{r+s+t})\neq 0
\]
for all $1\leq s<r+s+t\leq n$.  We set
\[
\bar{c}_{i,i+1}=\bar{c}_i
\]
for all $i=1,\ldots,n-2$. This implies that
\[
g_{s,r+s+t}(\bar{c}_{s,s+1},\bar{c}_{s+1,s+2},\ldots,\bar{c}_{r+s-2,r+s-1})\neq 0
\]
for all $1\leq s<r+s+t\leq n$.
\end{proof}

The following result is similar to \cite[Lemma 3.10]{ChenLuoWang1}. We give its proof for completeness.

\begin{lemma}\label{L3.6}
Let $p(x_1,\ldots,x_m)$ be a linear
 polynomial with zero constant term over a field $K$. Suppose that ord$(p)=1$ and $|K|\geq n$. We have that $p(UT_n)=UT_{n}^{(0)}$.
\end{lemma}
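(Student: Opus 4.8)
The plan is to extract the two consequences of $\mathrm{ord}(p)=1$ from Lemma \ref{L3.3}, use the first for the trivial inclusion, and use the second together with Lemma \ref{L3.5} in the case $r=1$ to produce, for each prescribed strictly upper triangular matrix, an explicit tuple that is sent to it.

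First, since $\mathrm{ord}(p)=1$, Lemma \ref{L3.3} gives $p(K)=\{0\}$ together with $p_{i_0}(K)\neq\{0\}$ for some $i_0\in\{1,\ldots,m\}$, where $p_{i_0}(\bar{z}_1,\bar{z}_2)$ is the linear polynomial on $2m$ variables appearing in (\ref{e2}). Combining $p(K)=\{0\}$ with Proposition \ref{P1}, for every $u_1,\ldots,u_m\in UT_n$ the diagonal entries of $p(u_1,\ldots,u_m)$ equal $p(\bar{a}_{jj})=0$, so $p(UT_n)\subseteq UT_n^{(0)}$; this is the easy inclusion.

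For the reverse inclusion I would apply Lemma \ref{L3.5} with $r=1$ to the family $f_{s,t}(\bar{x}_s,\bar{x}_t):=p_{i_0}(\bar{x}_s,\bar{x}_t)$, $1\leq s<t\leq n$: each $f_{s,t}$ is a nonzero linear polynomial on $2m$ variables with $f_{s,t}(K)=p_{i_0}(K)\neq\{0\}$, and for $r=1$ the quantity $\frac{(2n-3r+1)r}{2}$ equals $n-1$, so the hypothesis $|K|\geq n$ makes Lemma \ref{L3.5} applicable. It produces vectors $\bar{c}_1,\ldots,\bar{c}_n\in K^m$ with $p_{i_0}(\bar{c}_s,\bar{c}_t)\neq 0$ for all $1\leq s<t\leq n$. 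Given an arbitrary $B=(b_{st})\in UT_n^{(0)}$, I would then take $u_i=(a_{jk}^{(i)})\in UT_n$, $i=1,\ldots,m$, with $\bar{a}_{jj}=\bar{c}_j$ for all $j$, with $a_{jk}^{(i)}=0$ for $j<k$ whenever $i\neq i_0$, and with $a_{st}^{(i_0)}=p_{i_0}(\bar{c}_s,\bar{c}_t)^{-1}b_{st}$ for $s<t$.

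The step I expect to require the most care is checking that $p(u_1,\ldots,u_m)=B$ for this choice. The diagonal entries are $p(\bar{c}_j)=0$. For $s<t$, every term in the formula of Proposition \ref{P1} carries a factor $a_{j_lj_{l+1}}^{(i_l)}$ with $j_l<j_{l+1}$ for each $l=1,\ldots,k$, and such a factor vanishes unless $i_l=i_0$; since $i_1,\ldots,i_k$ are pairwise distinct, a nonzero term must have $k=1$, so $p_{st}=p_{i_0}(\bar{c}_s,\bar{c}_t)\,a_{st}^{(i_0)}=b_{st}$. Hence $p(u_1,\ldots,u_m)=B$, which gives $UT_n^{(0)}\subseteq p(UT_n)$; together with the easy inclusion this yields $p(UT_n)=UT_n^{(0)}$.
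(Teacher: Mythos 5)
Your proposal is correct and follows essentially the same route as the paper: both extract $p(K)=\{0\}$ and $p_{i_0}(K)\neq\{0\}$ from Lemma \ref{L3.3}, invoke Lemma \ref{L3.5} with $r=1$ (where $\frac{(2n-3r+1)r}{2}=n-1$, so $|K|\geq n$ suffices) to obtain $\bar{c}_1,\ldots,\bar{c}_n$ with $p_{i_0}(\bar{c}_s,\bar{c}_t)\neq 0$ for all $s<t$, and then build witnesses whose only nonzero off-diagonal entries sit in $u_{i_0}$. The one genuine difference is the last step: the paper keeps the off-diagonal entries of $u_{i_0}$ as unknowns and solves a triangular system of equations, whereas you observe that every term with $k\geq 2$ in the formula of Proposition \ref{P1} forces two distinct indices among $i_1,\ldots,i_k$ to equal $i_0$ and hence vanishes, so that $p_{st}=p_{i_0}(\bar{c}_s,\bar{c}_t)a_{st}^{(i_0)}$ exactly and the preimage of $B$ can be written down in closed form; this is a valid and in fact cleaner conclusion than the paper's, which formally retains those higher-order terms.
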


\begin{proof}
In view of Lemma \ref{L3.3}(i) we note that $p(K)=\{0\}$. It follows from Proposition \ref{P1} that $p(UT_n)\subseteq UT_n^{(0)}$. It suffices to prove that $UT_n^{(0)}\subseteq p(UT_n)$.

In view of Lemma \ref{L3.3}(iii) we have that there exist  $i_0\in\{1,\ldots,m\}$ and $\bar{b}_1,\bar{b}_2\in K^m$ such that
\[
p_{i_0}(\bar{b}_1,\bar{b}_2)\neq 0.
\]
Since $|K|\geq n$ we get from Lemma \ref{L3.5} that there exist $\bar{c}_1,\ldots,\bar{c}_n\in K^m$ such that
\[
p_{i_0}(\bar{c}_{s},\bar{c}_{t})\neq 0
\]
for all $1\leq s<t\leq n$.

For any $A'=(a_{jk}')\in UT_{n}^{(0)}$, we take $u_i=(a_{jk}^{(i)})\in UT_n$, $i=1,\ldots,m$, where
\[
\left\{
\begin{aligned}
\bar{a}_{jj}&=\bar{c}_{j}\quad\mbox{for all $j=1,\ldots,n$};\\
a_{st}^{(i_0)}&=x_{st}^{(i_0)}\quad\mbox{for all $1\leq s<t\leq n$};\\
a_{st}^{(i)}&=0,\quad\mbox{otherwise}.
\end{aligned}
\right.
\]
It follows from Proposition \ref{P1} that
\[
p(u_1,\ldots,u_m)=(p_{st}),
\]
where $p_{ss}=0$ for $s=1,\ldots,n$, and
\begin{eqnarray}\label{e6}
\begin{split}
p_{st}&=p_{i_0}(\bar{c}_{s},\bar{c}_{t})x_{st}^{(i_0)}\\
&\ \ \ +\sum\limits_{k=2}^{t-s}\left(\sum\limits_{s=j_1<j_2<\cdots <j_{k+1}=t}
p_{i_1\cdots i_k}(\bar{c}_{j_1j_1},\ldots,\bar{c}_{j_{k+1}j_{k+1}})x_{j_1j_2}^{(i_0)}\cdots x_{j_{k}j_{k+1}}^{(i_0)}\right)
\end{split}
\end{eqnarray}
for all $1\leq s<t\leq n$.  Consider the following group of equations:
\begin{eqnarray}\label{e7}
\begin{split}
a_{st}'&=p_{i_0}(\bar{c}_{s},\bar{c}_{t})x_{st}^{(i_0)}\\
&\ \ \ +\sum\limits_{k=2}^{t-s}\left(\sum\limits_{s=j_1<j_2<\cdots <j_{k+1}=t}
p_{i_1\cdots i_k}(\bar{c}_{j_1j_1},\ldots,\bar{c}_{j_{k+1}j_{k+1}})x_{j_1j_2}^{(i_0)}\cdots x_{j_{k}j_{k+1}}^{(i_0)}\right)
\end{split}
\end{eqnarray}
for all $1\leq s<t\leq n$. Since $p_{i_0}(\bar{c}_{s},\bar{c}_{t})\neq 0$ for all $1\leq s<t\leq n$, we easily check that the group of equations (\ref{e7}) has a solution
\[
\left\{c_{st}^{(i_0)}\in K~|~\mbox{for all $1\leq s<t\leq n$}\right\}.
\]
We take
\[
x_{st}^{(i_0)}=c_{st}^{(i_0)}
\]
for all $1\leq s<t\leq n$. It follows from both (\ref{e6}) and (\ref{e7}) that
\[
p(u_1,\ldots,u_m)=(p_{st})=(a_{st}')=A'.
\]
This implies that $T_n(K)^{(0)}\subseteq p(T_n(K))$ as desired. This proves the result.
\end{proof}

The following result is crucial for the proof of our main result.

\begin{lemma}\label{L3.7}
Let $p(x_1,\ldots,x_m)$ be a nonzero linear polynomial with zero constant term over a field $K$. Suppose that ord$(p)=r$, $2\leq r\leq n-2$. Suppose that $|K|>\frac{(2n-3r+1)r}{2}$.  We have that $p(UT_n)=UT_{n}^{(r-1)}$.
\end{lemma}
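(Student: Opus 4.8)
The strategy is to mirror the structure of Lemma \ref{L3.6} but one level up, using Proposition \ref{P1} to reduce the problem to solving a triangular system of polynomial equations in the strictly-upper entries of the test matrices, with the diagonal entries fixed once and for all by the combinatorial selection lemma. First I would invoke Lemma \ref{L3.3}: since $\mathrm{ord}(p)=r$, we have $p(K)=\{0\}$ and $p_{i_1\cdots i_s}(K)=\{0\}$ for all shorter tuples $s\le r-1$, so Proposition \ref{P1} immediately gives $p(UT_n)\subseteq UT_n^{(r-1)}$; all the diagonal blocks $p_{ss}$ vanish, and so do all $p_{st}$ with $t-s\le r-1$. It remains to prove the reverse inclusion $UT_n^{(r-1)}\subseteq p(UT_n)$.

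Fix an arbitrary target $A'=(a'_{st})\in UT_n^{(r-1)}$. By Lemma \ref{L3.3}(iii) there is a tuple $(i'_1,\dots,i'_r)\in T^r_m$ with $p_{i'_1\cdots i'_r}\not\equiv 0$, i.e. $p_{i'_1\cdots i'_r}(K)\neq\{0\}$. For each pair $1\le s<t\le n$ with $t-s\ge r$, write $t=r+s+u$ where $u=t-s-r\ge 0$, and consider the leading coefficient polynomial
\[
f_{s,t}(\bar z_s,\dots,\bar z_{s+r-1},\bar z_t):=p_{i'_1\cdots i'_r}(\bar z_s,\bar z_{s+1},\dots,\bar z_{s+r-1},\bar z_t),
\]
which is a nonzero linear polynomial in $m(r+1)$ variables with $f_{s,t}(K)\neq\{0\}$ (it is essentially $p_{i'_1\cdots i'_r}$ with variables relabeled). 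Since $|K|>\tfrac{(2n-3r+1)r}{2}$, Lemma \ref{L3.5} applied to this family produces a single choice of diagonal data $\bar c_1,\dots,\bar c_n\in K^m$ with $f_{s,t}(\bar c_s,\dots,\bar c_{s+r-1},\bar c_t)\neq 0$ for all admissible $s,t$. Now build $u_i=(a^{(i)}_{jk})\in UT_n$ by setting $\bar a_{jj}=\bar c_j$ for all $j$, keeping all entries in positions $(j,k)$ with $k-j<r$ equal to $0$ (they play no role, since the corresponding lower-length coefficient polynomials vanish identically by Lemma \ref{L3.3}(ii)), and letting the entries $a^{(i)}_{jk}$ with $k-j\ge r$ be indeterminates — it will in fact suffice to use only the single letter $i=i'_r$, say, assigning variables $x^{(i'_r)}_{jk}$ there and zero to the other letters, exactly as in Lemma \ref{L3.6}; more precisely one should distribute the chain-entries along the indices $i'_1,\dots,i'_r$ so that the length-$r$ term reads $p_{i'_1\cdots i'_r}(\bar c_s,\dots,\bar c_t)$ times a single off-diagonal variable $x_{st}$ and all higher-length terms are polynomials in those same variables.

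With these substitutions Proposition \ref{P1} yields $p(u_1,\dots,u_m)=(p_{st})$ with $p_{st}=0$ for $t-s\le r-1$ and, for $t-s\ge r$,
\[
p_{st}=p_{i'_1\cdots i'_r}(\bar c_s,\dots,\bar c_{s+r-1},\bar c_t)\,x_{st}
+\big(\text{terms involving only }x_{jk}\text{ with }k-j\ge r,\ (j,k)\neq(s,t),\ j\ge s,\ k\le t\big).
\]
This is a \emph{triangular} system in the unknowns $\{x_{st}:t-s\ge r\}$: ordering the pairs by increasing $t-s$, the equation $a'_{st}=p_{st}$ determines $x_{st}$ in terms of the (already solved) variables $x_{jk}$ with strictly smaller gap, with leading coefficient $p_{i'_1\cdots i'_r}(\bar c_s,\dots,\bar c_t)\neq 0$ which is invertible in $K$. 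Hence the system $a'_{st}=p_{st}$ for all $t-s\ge r$ admits a solution $\{c_{st}\in K\}$, and substituting it gives $p(u_1,\dots,u_m)=A'$, so $A'\in p(UT_n)$. As $A'$ was arbitrary, $UT_n^{(r-1)}\subseteq p(UT_n)$, and combined with the first paragraph $p(UT_n)=UT_n^{(r-1)}$.

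The main obstacle — and the point requiring the most care — is the bookkeeping in the middle paragraph: one must verify that after fixing the diagonals to the values $\bar c_j$, the length-$r$ contribution to each $p_{st}$ genuinely isolates one fresh off-diagonal variable with a nonzero (hence invertible) coefficient, while every higher-length contribution involves only off-diagonal variables of strictly smaller gap, so that the resulting system is honestly triangular and solvable over $K$ without any further hypothesis on $|K|$. The counting inequality $|K|>\tfrac{(2n-3r+1)r}{2}$ enters only through Lemma \ref{L3.5} to secure the simultaneous nonvanishing of all leading coefficients; everything afterward is linear algebra over $K$. One should also note the hypothesis $r\le n-2$ guarantees $UT_n^{(r-1)}\neq\{0\}$ so the statement is non-vacuous, and that the case $r=n-1$ (where $UT_n^{(r-1)}$ is one-dimensional) would be handled separately or is subsumed in the running induction of the main theorem.
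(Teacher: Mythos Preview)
Your argument has a genuine gap in the construction of the test matrices. You set all off-diagonal entries at positions $(j,k)$ with $k-j<r$ to zero, justifying this by Lemma~\ref{L3.3}(ii). But that lemma only says the \emph{coefficient polynomials} $p_{i_1\cdots i_s}$ vanish for $s<r$, which kills the length-$s$ \emph{contributions} to each $p_{st}$; it does \emph{not} say that entries at positions of small gap are irrelevant. On the contrary, for a target position $(s,t)$ with $t-s=r$ the length-$r$ contribution in Proposition~\ref{P1} is a sum over chains $s=j_1<j_2<\cdots<j_{r+1}=t$, and the unique such chain is $s,s+1,\ldots,s+r$; every factor $a^{(i_l)}_{j_l,j_{l+1}}$ in that product sits at a position of gap $1<r$ and has been zeroed. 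Hence with your choices $p_{st}=0$ for every $(s,t)$ with $t-s=r$ (indeed for every $t-s<r^2$), so you can never reach a target $A'\in UT_n^{(r-1)}$ with a nonzero entry on the $r$-th superdiagonal. The displayed formula $p_{st}=p_{i'_1\cdots i'_r}(\bar c_s,\ldots,\bar c_t)\,x_{st}+\cdots$ is not attainable: the length-$r$ term is a product of $r$ off-diagonal entries, not a single one, and no ``distribution of the chain-entries along $i'_1,\ldots,i'_r$'' collapses it to one fresh variable at position $(s,t)$.

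The paper's proof repairs exactly this point by keeping the first $r-1$ superdiagonals active. For each target entry $(s,r+s+t)$ it uses the canonical chain $s,s+1,\ldots,r+s-1,r+s+t$, takes the \emph{last} step $a^{(i'_r)}_{r+s-1,\,r+s+t}$ as the free variable, and regards its coefficient
\[
h_{s,r+s+t}\bigl(\bar x_{s,s+1},\ldots,\bar x_{r+s-2,r+s-1}\bigr)=\sum_{\substack{(i_1,\ldots,i_r)\in T^r_m\\ i_r=i'_r}}p_{i_1\cdots i_r}(\bar b_s,\ldots,\bar b_{r+s-1},\bar b_{r+s+t})\,x^{(i_1)}_{s,s+1}\cdots x^{(i_{r-1})}_{r+s-2,r+s-1}
\]
as a linear polynomial in the first $r-1$ superdiagonal entries. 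One checks $h_{s,r+s+t}(K)\neq\{0\}$, and then a \emph{second} application of the selection machinery---Corollary~\ref{CC1}, again under $|K|>\tfrac{(2n-3r+1)r}{2}$---produces fixed values $b^{(i)}_{j,j+1}$ making all the $h_{s,r+s+t}$ simultaneously nonzero. Only after this second selection does one obtain a genuinely triangular system in the variables $x^{(i'_r)}_{r+s-1,\,r+s+t}$, ordered by increasing gap $t+1$, with invertible leading coefficients. Your outline invokes Lemma~\ref{L3.5} only once, for the diagonals; the missing ingredient is this second invocation for the near-diagonal band, without which the leading coefficients you need are simply zero.
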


\begin{proof}
In view of both Proposition \ref{P1} and Lemma \ref{L3.3}(i)(ii) we note that $p(UT_n)\subseteq UT_n^{(r-1)}$. It suffices to prove that $UT_n^{(r-1)}\subseteq p(UT_n)$. In view of Lemma \ref{L3.3}(iii) we have that
\[
p_{i_1'\cdots i_r'}(K)\neq \{0\}
\]
for some $(i_1',\ldots,i_r')\in T_r$. Since $|K|>\frac{(2n-3r+1)r}{2}$ we get from Lemma \ref{L3.5} that there exist $\bar{b}_1,\ldots,\bar{b}_n\in K^m$ such that
\begin{equation}\label{e8}
p_{i_1'\cdots i_r'}(\bar{b}_{s},\ldots,\bar{b}_{r+s-1},\bar{b}_{r+s+t})\neq 0
\end{equation}
for all $1\leq s<r+s+t\leq n$. We set
\[
\widehat{b}_{s,t}=(\bar{b}_{s},\ldots,\bar{b}_{r+s-1},\bar{b}_{r+s+t})
\]
for all $1\leq s<r+s+t\leq n$. It follows from (\ref{e8}) that
\[
p_{i_1'\cdots i_r'}(\widehat{b}_{s,t})\neq 0
\]
for all $1\leq s<r+s+t\leq n$. For any $u_i=(a_{jk}^{(i)})\in UT_n$, $i=1,\ldots,m$, we take $\bar{a}_{jj}=\bar{b}_j$ for all $j=1,\ldots,n$. It follows from Proposition \ref{P1} that
\[
f(u_1,\ldots,u_m)=(p_{s,r+s+t})
\]
where

\begin{eqnarray}\label{e9}
\begin{split}
p&_{s,r+s+t}=\sum\limits_{k=r}^{r+t}\left(\sum\limits_{\substack{s=j_1<\cdots <j_{k+1}=r+s+t\\(i_1,\ldots,i_k)\in T^k_m}}p_{i_1\cdots i_k}(\bar{b}_{j_1},\ldots,\bar{b}_{j_{k+1}})a_{j_1j_2}^{(i_1)}\cdots a_{j_{k}j_{k+1}}^{(i_k)}\right)\\
&=\sum\limits_{\substack{s=j_1<\cdots <j_{r+1}=r+s+t\\(i_1,\ldots,i_r)\in T_r}}p_{i_1\cdots i_r}(\bar{b}_{j_1},\ldots,\bar{b}_{j_{r+1}})a_{j_1j_2}^{(i_1)}\cdots a_{j_{r}j_{r+1}}^{(i_r)}\\
&\ \ \ +\sum\limits_{k=r+1}^{r+t}\left(\sum\limits_{\substack{s=j_1<\cdots <j_{k+1}=r+s+t\\(i_1,\ldots,i_k)\in T^k_m}}p_{i_1\cdots i_k}(\bar{b}_{j_1},\ldots,\bar{b}_{j_{k+1}})a_{j_1j_2}^{(i_1)}\cdots a_{j_{k}j_{k+1}}^{(i_k)}\right)\\
&=\left(\sum\limits_{\substack{(i_1,\ldots,i_r)\in T_r\\i_r=i_r'}}p_{i_1\cdots i_r}(\widehat{b}_{s,t})a_{s,s+1}^{(i_1)}\cdots a_{r+s-2,r+s-1}^{(i_{r-1})}\right)a_{r+s-1,r+s+t}^{(i_r')}\\
&\ \ \ +\sum\limits_{\substack{(i_1,\ldots,i_{r})\in T_r\\ i_r\neq i_r'}}p_{i_1\cdots i_r}(\widehat{b}_{s,t})a_{s,s+1}^{(i_1)}\cdots a_{r+s-2,r+s-1}^{(i_{r-1})}a_{r+s-1,r+s+t}^{(i_r)}\\
&\ \ \ +\sum\limits_{\substack{s\leq j_1<\cdots j_{r+1}\leq r+s+t\\(j_r,j_{r+1})\neq (r+s-1,r+s+t)\\(i_1,\ldots,i_r)\in T_r}}p_{i_1\cdots i_r}(\bar{b}_{j_1},\ldots,\bar{b}_{j_{r+1}})a_{j_1j_2}^{(i_1)}\cdots a_{j_rj_{r+1}}^{(i_r)}\\
&\ \ \ +\sum\limits_{k=r+1}^{t-s}\left(\sum\limits_{\substack{s=j_1<j_2<\cdots <j_{k+1}=r+s+t\\(i_1,\ldots,i_k)\in T^k_m}}
p_{i_1\cdots i_k}(\bar{b}_{j_1},\ldots,\bar{b}_{j_{k+1}})a_{j_1j_2}^{(i_1)}\cdots a_{j_{k}j_{k+1}}^{(i_k)}\right)
\end{split}
\end{eqnarray}

for all $1\leq s<r+s+t\leq n$.  We set
\[
h_{s,r+s+t}=\sum\limits_{\substack{(i_1,\ldots,i_{r})\in T_r\\ i_r=i_r'}}p_{i_1\cdots i_r}(\widehat{b}_{s,t})x_{s,s+1}^{(i_1)}\cdots x_{r+s-2,r+s-1}^{(i_{r-1})}
\]
for all $1\leq s<r+s+t\leq n$. By $W_{s,r+s+t}$ we denote the index set of the variables in $h_{s,r+s+t}$, where $1\leq s<r+s+t\leq n$. That is, $h_{s,r+s+t}$ is a linear polynomial on the variables
\[
\left\{x_{jk}^{(i)}\in X~|~(j,k,i)\in W_{s,r+s+t}\right\}
\]
for all $1\leq s<r+s+t\leq n$. We claim that
\[
h_{s,r+s+t}(K)\neq\{ 0\}
\]
for all $1\leq s<r+s+t\leq n$. Indeed, we take
\[
\left\{
\begin{aligned}
a_{j,j+1}^{(i_{j-s+1}')}&=1\quad\mbox{for all $j=s,\ldots,r+s-2$};\\
a_{jk}^{(i)}&=0\quad\mbox{otherwise}.
\end{aligned}
\right.
\]
It follows that
\begin{eqnarray*}
\begin{split}
h_{s,r+s+t}(a_{jk}^{(i)})&=p_{i_1'\cdots i_r'}(\widehat{b}_{s,t})a_{s,s+1}^{(i_1')}\cdots a_{r+s-2,r+s-1}^{(i_{r-1}')}\\
&=p_{i_1'\cdots i_r'}(\widehat{b}_{s,t})\neq 0,
\end{split}
\end{eqnarray*}
as desired. Set
\[
W=\bigcup_{1\leq s<r+s+t\leq n}W_{s,r+s+t}.
\]
Since $|K|>\frac{(2n-3r+1)r}{2}$ we get from Corollary \ref{CC1} that there exist $b_{jk}^{(i)}\in K$, where $(j,k,i)\in W$ such that
\[
h_{s,r+s+t}(b_{jk}^{(i)})\neq 0
\]
for all $1\leq s<r+s+t\leq n$. We define an order of the set of variables
\[
\left\{x_{r+s-1,r+s+t}^{(i_r')}\in X~|~1\leq s<r+s+t\leq n\right\}
\]
as follows:
\[
x_{r,r+1}^{(i_r')}<\cdots <x_{n-1,n}^{(i_r')}<x_{r,r+2}^{(i_r')}<\cdots <x_{n-2,n}^{(i_r')}<\cdots <x_{r,n}^{(i_r')}.
\]

We take
\[
\left\{
\begin{aligned}
a_{jk}^{(i)}&=b_{jk}^{(i)}\quad\mbox{for all $(j,k,i)\in W$};\\
a_{r+s-1,r+s+t}^{(i_r')}&=x_{r+s-1,r+s+t}^{(i_r')}\quad\mbox{for all $1\leq s<r+s+t\leq n$};\\
a_{jk}^{(i)}&=0,\quad\mbox{otherwise}
\end{aligned}
\right.
\]
in (\ref{e9}). It follows from (\ref{e9}) that
\[
p_{1,r+1}=h_{1,r+1}(b_{jk}^{(i)})x_{r,r+1}^{(i_r')}+\alpha_{1,r+1},
\]
where $\alpha_{1,r+1}\in K$ and
\[
p_{2,r+2}=h_{2,r+2}(b_{jk}^{(i)})x_{r+1,r+2}^{(i_r')}+\alpha_{2,r+2}(x_{r,r+1}^{(i_r')}),
\]
where $\alpha_{2,r+2}$ is a polynomial with the variable $x_{r,r+1}^{(i_r')}$ over $K$. Continuing the same arguments as above we can get from (\ref{e9}) that
\begin{equation}\label{e10}
p_{s,r+s+t}=h_{s,r+s+t}(b_{jk}^{(i)})x_{r+s-1,r+s+t}^{(i_r')}+\alpha_{s,r+s+t}(x_{1,r+1}^{(i_r')},\ldots, x_{s_1,t_1}^{(i_r')})
\end{equation}
for all $1\leq s\leq r+s+t\leq n$, where $\alpha_{s,r+s+t}$ is a polynomial with all previous variables $\{x_{1,r+1}^{(i_r')},\ldots, x_{s_1,t_1}^{(i_r')}\}$ of $x_{r+s-1,r+s+t}^{(i_r')}$ over $K$.

For any $A=(a_{jk}')\in UT_n^{(r-1)}$, we consider the following the group of equations:
\begin{equation}\label{e11}
h_{s,r+s+t}(b_{jk}^{(i)})x_{r+s-1,r+s+t}^{(i_r')}+\alpha_{s,r+s+t}(x_{1,r+1}^{(i_r')},\ldots, x_{s_1,t_1}^{(i_r')})=a_{s,r+s+t}'
\end{equation}
for all $1\leq s<r+s+t\leq n$. Since $\alpha_{1,r+1}\in K$ and
\[
h_{s,r+s+t}(b_{jk}^{(i)})\neq 0
\]
for all $1\leq s<r+s+t\leq n$, we easily check that the group of equations (\ref{e11}) has a solution
\[
\left\{b_{r+s-1,r+s+t}^{(i_r')}\in K~|~1\leq s<r+s+t\leq n\right\}.
\]

Finally we take
\[
x_{r+s-1,r+s+t}^{(i_r')}=b_{r+s-1,r+s+t}^{(i_r')}
\]
for all $1\leq s<r+s+t\leq n$ in (\ref{e10}). It follows from both (\ref{e10}) and (\ref{e11}) that
\[
p_{s,r+s+t}=a_{s,r+s+t}'
\]
for all $1\leq s<r+s+t\leq n$. This implies that

\[
f(u_1,\ldots,u_m)=\left(p_{s,r+s+t}\right)=\left(a_{s,r+s+t}'\right)=A'.
\]
We obtain that $UT_n^{(r-1)}\subseteq p(UT_n)$ as desired. The proof of the result is complete.
\end{proof}

The following result is similar to \cite[Lemma 3.12]{ChenLuoWang1}. We give its proof for completeness.

\begin{lemma}\label{L3.8}
Let $p(x_1,\ldots,x_m)$ be a linear polynomial with zero constant term over a field $K$. Suppose that \emph{ord}$(p)=n-1$. We have that $p(T_n(K))=T_{n}(K)^{(n-2)}$.
\end{lemma}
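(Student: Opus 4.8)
The plan is to establish the two inclusions $p(UT_n)\subseteq UT_n^{(n-2)}$ and $UT_n^{(n-2)}\subseteq p(UT_n)$ separately, both by a direct computation through Proposition \ref{P1} and Lemma \ref{L3.3}; notice that $UT_n^{(n-2)}$ is the one-dimensional space of matrices all of whose entries except possibly the $(1,n)$ one vanish, and that (as the statement reflects) no hypothesis on $|K|$ is required. Write $r=n-1=\mathrm{ord}(p)$. For the first inclusion, take arbitrary $u_i=(a_{jk}^{(i)})\in UT_n$ and write $p(u_1,\ldots,u_m)=(p_{st})$ as in (\ref{e2}). By Lemma \ref{L3.3}(i) the diagonal entries $p_{ss}=p(\bar a_{ss})$ vanish; and for $1\le s<t\le n$ with $t-s\le n-2$ every term of $p_{st}$ carries a factor $p_{i_1\cdots i_k}(\bar a_{j_1j_1},\ldots,\bar a_{j_{k+1}j_{k+1}})$ with $1\le k\le t-s\le r-1$ and arguments lying in $K^m$, hence is zero by Lemma \ref{L3.3}(ii). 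So the only possibly nonzero entry of $p(u_1,\ldots,u_m)$ is the $(1,n)$ one, i.e. $p(UT_n)\subseteq UT_n^{(n-2)}$.

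For the reverse inclusion I would first invoke Lemma \ref{L3.3}(iii): there are a tuple $(i_1',\ldots,i_{n-1}')\in T^{n-1}_m$ and vectors $\bar b_1,\ldots,\bar b_n\in K^m$ with
\[
\gamma:=p_{i_1'\cdots i_{n-1}'}(\bar b_1,\ldots,\bar b_n)\neq 0.
\]
Fix $c\in K$. Substituting $\bar a_{jj}=\bar b_j$ for all $j$ in (\ref{e2}) and using Lemma \ref{L3.3}(ii) again, the only summand of $p_{1n}$ that can survive is the one with $k=n-1$, and there the chain $1=j_1<\cdots<j_n=n$ is forced to be $j_\ell=\ell$, so
\[
p_{1n}=\sum_{(i_1,\ldots,i_{n-1})\in T^{n-1}_m}p_{i_1\cdots i_{n-1}}(\bar b_1,\ldots,\bar b_n)\,a_{12}^{(i_1)}a_{23}^{(i_2)}\cdots a_{n-1,n}^{(i_{n-1})}.
\]
Now I would choose the remaining entries of the $u_i$ as follows: set $a_{j,j+1}^{(i_j')}=1$ for $j=2,\ldots,n-1$, set $a_{1,2}^{(i_1')}=\gamma^{-1}c$, and set every other strictly upper entry of every $u_i$ equal to $0$; this is a consistent prescription precisely because $i_1',\ldots,i_{n-1}'$ are pairwise distinct. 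Since $a_{j,j+1}^{(i)}=0$ whenever $i\neq i_j'$, every monomial in the displayed expression for $p_{1n}$ other than the one attached to $(i_1',\ldots,i_{n-1}')$ vanishes, leaving $p_{1n}=\gamma\cdot\gamma^{-1}c=c$; meanwhile $p_{ss}=0$ and $p_{st}=0$ for $t-s<n-1$ exactly as in the first paragraph. Hence $p(u_1,\ldots,u_m)$ is the matrix with $(1,n)$ entry $c$ and all other entries $0$, and letting $c$ vary over $K$ gives $UT_n^{(n-2)}\subseteq p(UT_n)$. Combining the two inclusions yields $p(UT_n)=UT_n^{(n-2)}$.

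The argument is essentially a single bookkeeping computation and I do not expect a genuine obstacle; the one point deserving care is the step in the second paragraph where, after the substitution $\bar a_{jj}=\bar b_j$ and the zeroing of the non-selected superdiagonal entries, one must be sure that exactly one monomial of $p_{1n}$ survives. This rests on two facts already available: the distinctness of $i_1',\ldots,i_{n-1}'$ (so the $n-1$ selected superdiagonal slots carry distinct variable indices and do not interfere with one another) and Lemma \ref{L3.3}(ii) (which kills all coefficient polynomials $p_{i_1\cdots i_k}$ of length $k<n-1$). The whole scheme runs parallel to the proof of \cite[Lemma 3.12]{ChenLuoWang1}, and the case $n=2$ (where Lemma \ref{L3.3}(ii) is vacuous) is covered by the same computation.
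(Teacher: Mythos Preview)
Your proof is correct. The containment $p(UT_n)\subseteq UT_n^{(n-2)}$ is handled exactly as in the paper, and your explicit construction for the reverse inclusion is sound: Lemma \ref{L3.3}(iii) produces $(i_1',\ldots,i_{n-1}')$ and $\bar b_1,\ldots,\bar b_n$ with $\gamma\neq 0$, Lemma \ref{L3.3}(ii) kills the $k\le n-2$ contributions to $p_{1n}$, the chain $1=j_1<\cdots<j_n=n$ is forced, and your choice of superdiagonal entries isolates the single monomial indexed by $(i_1',\ldots,i_{n-1}')$ because $a_{j,j+1}^{(i)}=0$ whenever $i\neq i_j'$.

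The paper argues the reverse inclusion differently. It observes that, as a function of all the entry variables $a_{jk}^{(i)}$, the $(1,n)$ entry $p_{1n}$ is itself a linear polynomial with zero constant term (this follows from the expansion in the proof of Proposition \ref{P1}, since in each monomial the superscripts $i_1,\ldots,i_k$ are pairwise distinct). Since $\mathrm{ord}(p)=n-1$ forces $p_{1n}(K)\neq\{0\}$, Lemma \ref{L3.1} immediately gives $p_{1n}(K)=K$, and the result follows without constructing the $u_i$ by hand. Your route is more explicit and parallels the machinery of Lemma \ref{L3.7} specialized to $r=n-1$, avoiding any appeal to Lemma \ref{L3.1}; the paper's route is shorter but depends on recognizing $p_{1n}$ as a polynomial to which Lemma \ref{L3.1} applies.
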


\begin{proof}
In view of Proposition \ref{P1} and Lemma \ref{L3.2}(ii) we have that $p(T_n(K))\subseteq T_n(K)^{(n-2)}$. It suffices to prove that $T_n(K)^{(n-2)}\subseteq p(T_n(K))$.

Since ord$(p)=n-1$ we get that $p(T_n(K))\neq\{0\}$. It implies from Proposition \ref{P1} that
\[
p_{1n}(K)\neq \{0\}.
\]
Note that $p_{1n}$ is a linear polynomial with zero constant term over $K$. We get from Lemma \ref{L3.1} that
\[
p_{1n}(K)=K.
\]

For any $a_{1n}'\in K$ we get that that there exist $a_{jk}^{(i)}\in K$, where $1\leq j\leq k\leq n$, $i=1,\ldots,m$, such that
\[
p_{1n}(a_{jk}^{(i)})=a_{1n}'.
\]
We set $u_{i}=(a_{jk}^{(i)})\in T_n(K)$, $i=1,\ldots,m$. It follows from (\ref{e2}) that
\begin{eqnarray*}
\begin{split}
p(u_1,\ldots,u_m)&=\left(
\begin{array}{cccc}
0 & 0 & \ldots & p_{1n}(a_{jk}^{(i)})\\
0 & 0 & \ldots &  0 \\
 \vdots & \vdots & \ddots & \vdots\\
 0 & 0 & \ldots & 0
\end{array} \right)\\
&=\left(
\begin{array}{cccc}
0 & 0 & \ldots & a_{1n}'\\
0 & 0 & \ldots &  0 \\
 \vdots & \vdots & \ddots & \vdots\\
 0 & 0 & \ldots & 0
\end{array} \right).
\end{split}
\end{eqnarray*}
This implies that $T_n(K)^{(n-2)}\subseteq p(T_n(K))$ as desired. This proves the result.
\end{proof}

We are ready to give the proof of the main result of the section.

\begin{theorem}\label{T3}
Let $n\geq 2$ and $m\geq 1$ be integers. Let $K$ be a field, let $p(x_1,\ldots, x_m)$ be a nonzero linear polynomial with zero constant term over $K$. Set $r=ord(p)$. Then one of the following statement is true:
\begin{enumerate}
\item[(i)] Suppose that $r=0$. We have that $p(UT_n)=UT_n$;
\item[(ii)] Suppose that $r=1$ and $|K|\geq n$. We have that $p(UT_n)=UT_{n}^{(0)}$;
\item[(iii)] Suppose that $2\leq r\leq n-2$ and $|K|>\frac{(2n-3r+1)r}{2}$. We have that $p(UT_n)=UT_{n}^{(r-1)}$;
\item[(iv)] Suppose that $r=n-1$. We have that $p(UT_n)=UT_{n}^{(n-2)}$;
\item[(v)] Suppose that $r\geq n$. We have that $p(UT_{n})=\{0\}$.
\end{enumerate}
\end{theorem}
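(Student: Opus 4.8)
The plan is to argue by cases on the value of $r=\mathrm{ord}(p)$, since the five alternatives $r=0$, $r=1$, $2\le r\le n-2$, $r=n-1$, and $r\ge n$ exhaust all possibilities for a nonnegative integer $r$ (several of these ranges are empty when $n$ is small, which causes no trouble). Each case is then settled by quoting the corresponding lemma already proved in this section, so the theorem is essentially a bookkeeping assembly of Lemmas \ref{L3.3}--\ref{L3.8}; the substantive work has been done there.

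Concretely: for (i), I would observe that $r=\mathrm{ord}(p)=0$ is by definition equivalent to $p\notin\mathcal{T}(K)$, i.e.\ $p(K)\neq\{0\}$, so Lemma \ref{L3.2} gives $p(UT_n)=UT_n=UT_n^{(-1)}$ with no hypothesis on $K$. Case (ii) is exactly Lemma \ref{L3.6}, whose hypothesis is $|K|\ge n$; case (iii) is exactly Lemma \ref{L3.7}, whose hypothesis is $|K|>\frac{(2n-3r+1)r}{2}$; and case (iv) is exactly Lemma \ref{L3.8} (there $\mathrm{ord}(p)=n-1$ forces $p_{1n}(K)=K$ by Lemma \ref{L3.1}, so the full corner entry, and hence all of $UT_n^{(n-2)}$, is attained), with no cardinality restriction needed. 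For (v) I would use only the definition of order together with the chain $\mathcal{T}(K)\supset\mathcal{T}(UT_2)\supset\mathcal{T}(UT_3)\supset\cdots$: if $r=\mathrm{ord}(p)\ge n$ then $p\in\mathcal{T}(UT_r)\subseteq\mathcal{T}(UT_n)$, hence $p(UT_n)=\{0\}$ (equivalently, $UT_n$ embeds in $UT_r$ and $p(UT_r)=\{0\}$).

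Thus nothing in this theorem requires a new idea; the only point demanding a little care is checking that the cardinality bounds in (ii) and (iii) are precisely the ones needed downstream so that Theorem \ref{T2} and Corollary \ref{CC} follow, and here they are simply inherited verbatim from the lemmas. The genuine obstacle was overcome earlier: it is the combinatorial counting in Lemma \ref{L3.5} and Corollary \ref{CC1} bounding how many of the polynomials $f_{s,r+s+t}$ a single variable $\bar x_i$ can appear in, which is what produces the bound $\frac{(2n-3r+1)r}{2}$; case (iii) above is the one that rests on it, and it is the case that makes the theorem nontrivial.
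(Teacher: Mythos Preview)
Your proposal is correct and mirrors the paper's own proof, which also dispatches (i)--(iv) by citing Lemmas \ref{L3.2}, \ref{L3.6}, \ref{L3.7}, \ref{L3.8} respectively and handles (v) directly from the definition of $\mathrm{ord}(p)$. Your additional remarks explaining why the hypotheses of each lemma are met and where the real work was done are accurate elaborations, but the overall strategy is identical.
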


\begin{proof}
The statement (i) follows from Lemma \ref{L3.2}. The statement (ii) follows from Lemma \ref{L3.6}. The statement (iii) follows from Lemma \ref{L3.7}. The statement (iv) follows from Lemma \ref{L3.8}. The last statement follows from the definition of ord$(p)$.
\end{proof}

The following useful result is trivial.

\begin{lemma}\label{L3.9}
Let $n\geq 2$ be an integer, let $0\leq r\leq n-1$ be an integer. We have that
\begin{enumerate}
\item[(i)] Suppose that $n=2$. We have that
\[
\frac{(2n-3r+1)r}{2}\leq 1
\]
for all $0\leq r\leq n-1$;
\item[(ii)] Suppose that $n\geq 3$. We have that
\[
\frac{n(n-1)}{3}\geq \frac{(2n-3r+1)r}{2}
\]
for all $0\leq r\leq n-1$.
\end{enumerate}
\end{lemma}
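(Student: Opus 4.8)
The plan is to handle the two parts by elementary estimates on the quadratic function $g(r):=\frac{(2n-3r+1)r}{2}=\frac{(2n+1)r-3r^2}{2}$ in the integer variable $r$ with $0\le r\le n-1$. Part (i) is nothing more than a direct computation: when $n=2$ the only admissible values are $r=0$ and $r=1$, and $g(0)=0\le 1$ while $g(1)=\frac{(4-3+1)\cdot 1}{2}=1\le 1$, which is exactly the claim.

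For part (ii) I would regard $g$ as a \emph{concave} quadratic in the real variable $r$ and bound it by its vertex value. Completing the square gives
\[
2g(r)=-3\Bigl(r-\tfrac{2n+1}{6}\Bigr)^2+\frac{(2n+1)^2}{12}\le \frac{(2n+1)^2}{12},
\]
so $g(r)\le\frac{(2n+1)^2}{24}$ for every real $r$. Hence it suffices to prove $\frac{n(n-1)}{3}\ge\frac{(2n+1)^2}{24}$, which upon clearing denominators becomes $8n(n-1)\ge (2n+1)^2$, i.e.\ $4n^2-12n-1\ge 0$. Since the left-hand side is increasing in $n$ for $n\ge 2$ and equals $15>0$ at $n=4$, this inequality holds for all integers $n\ge 4$, which settles part (ii) in that range.

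The only genuine wrinkle is $n=3$, where the real-variable bound is just barely too weak ($4\cdot 9-12\cdot 3-1=-1<0$). Here, however, the admissible range is only $r\in\{0,1,2\}$, and a hand check gives $g(0)=0$, $g(1)=\frac{(7-3)\cdot 1}{2}=2$, and $g(2)=\frac{(7-6)\cdot 2}{2}=1$, all of which are $\le\frac{3\cdot 2}{3}=2$; so the case $n=3$ follows by inspection, completing the proof. I expect this small case distinction at $n=3$ to be the main — indeed essentially the only — point requiring care, everything else being mechanical, consistent with the paper's remark that the statement is trivial.
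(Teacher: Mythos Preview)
Your argument is correct in every detail: the direct check for $n=2$, the vertex bound $g(r)\le(2n+1)^2/24$ via completing the square, the reduction to $4n^2-12n-1\ge 0$ for $n\ge 4$, and the hand verification at $n=3$ all go through. The paper itself supplies no proof at all---it simply declares the lemma ``trivial''---so there is nothing to compare your approach against; your write-up is a perfectly acceptable way to flesh out the omitted routine verification.
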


We are in a position to give the proof of Theorem \ref{T2}.

\begin{proof}[The proof of Theorem \ref{T2}]
Set $r=ord(p)$. We note that $0\leq r\leq \frac{m}{2}$. Suppose first that $n=2$. In view of Lemma \ref{L3.8}(i) we note that $|K|>\frac{(2n-3r+1)r}{2}$ for all $0\leq r\leq n-1$. Suppose next that $n\geq 3$. In view of Lemma \ref{L3.8}(ii) we note that $|K|>\frac{(2n-3r+1)r}{2}$ for all $0\leq r\leq n-1$. Then the result follows from Theorem \ref{T3}.
\end{proof}

\section{Applications}

By $UT_n^{(-)}$ we denote the Lie algebra defined on $UT_n$ by means of the Lie bracket
\[
[a,b]=ab-ba.
\]

Recently, Fagundes and Koshlukov \cite{FK} obtained the following result:

\begin{theorem}\cite[Corollary 2.9]{FK}\label{T4.1}
Let $F$ be a field with $|K|\geq\frac{n(n-1)}{2}$, let $f$ be a multilinear Lie polynomial. Then $Im(f)$ on $UT^{(-)})$ is $J^r$, for some $0\leq r\leq n$.
\end{theorem}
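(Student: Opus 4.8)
The plan is to deduce Theorem \ref{T4.1} from Corollary \ref{CC} by treating the Lie polynomial $f$ as an ordinary associative polynomial. Recall that the free Lie algebra embeds into the free associative algebra $K\langle X\rangle$: replacing each left-normed bracket by the corresponding alternating sum of monomials turns a nonzero multilinear Lie polynomial $f(x_1,\ldots,x_m)$ into a nonzero multilinear element of $K\langle X\rangle$. Moreover, for every choice of $u_1,\ldots,u_m\in UT_n$, evaluating $f$ in the Lie algebra $UT_n^{(-)}$ with the commutator $[a,b]=ab-ba$ produces exactly the matrix obtained by evaluating the associative expansion of $f$ with the ordinary product. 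Hence, as subsets of $UT_n$, the image of $f$ on $UT_n^{(-)}$ equals $f(UT_n)$.

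If $f=0$ as a Lie polynomial, this image is $\{0\}$; since $J^n=UT_n^{(n-1)}=\{0\}$ we may take $r=n$ and we are done. So assume $f\neq 0$. Then $f$, in its associative form, is a nonzero multilinear polynomial in non-commutative variables over $K$. Because $n\geq 2$ gives $\frac{n(n-1)}{2}>\frac{n(n-1)}{3}$, the hypothesis $|K|\geq\frac{n(n-1)}{2}$ forces $|K|>\frac{n(n-1)}{3}$, so Corollary \ref{CC} applies and yields $f(UT_n)=UT_n^{(t)}$ for some integer $-1\leq t\leq\frac{m}{2}-1$.

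It remains to rewrite $UT_n^{(t)}$ in terms of powers of $J$. Using $UT_n^{(-1)}=UT_n=J^0$ and $UT_n^{(k)}=J^{k+1}$ for $k\geq 0$, we have $f(UT_n)=J^{t+1}$. If $t+1\leq n$, put $r=t+1$; if $t+1>n$, then $J^{t+1}=\{0\}=J^n$ and we put $r=n$. In both cases $\mathrm{Im}(f)=J^r$ with $0\leq r\leq n$, which is the assertion. (In fact every value of a Lie polynomial lies in $[UT_n,UT_n]=UT_n^{(0)}=J$, so $t\geq 0$ and one even gets $1\leq r\leq n$ whenever $f\neq 0$.)

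The only non-routine point is the reduction in the first paragraph: one must verify carefully that evaluating the multilinear Lie polynomial $f$ inside $UT_n^{(-)}$ yields the same set of matrices as evaluating its associative expansion on the associative algebra $UT_n$, and that a nonzero Lie polynomial remains nonzero as an associative polynomial. Both are standard consequences of the Poincar\'e--Birkhoff--Witt theorem (the Friedrichs criterion), but they are precisely what makes Corollary \ref{CC} available here; after this identification the argument is just bookkeeping with the chain $UT_n\supset J\supset J^2\supset\cdots$.
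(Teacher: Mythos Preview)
Your argument is correct. The paper does not give its own proof of Theorem~\ref{T4.1} (it is quoted from \cite{FK}, and the proof of the subsequent improved corollary is explicitly omitted), but your reduction---identifying the Lie evaluation on $UT_n^{(-)}$ with the associative evaluation on $UT_n$ and then invoking Corollary~\ref{CC}---is precisely the route the paper indicates when it says to ``apply Theorem~\ref{T2} and use the arguments of \cite[Corollary~2.9]{FK}''.
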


Applying Theorem \ref{T2} and using the same arguments as in \cite[Corollary 2.9]{FK}, we can obtain the following result. We omit its proof for brevity.

\begin{corollary}
Let $F$ be a field with $|K|>\frac{n(n-1)}{3}$, let $f$ be a multilinear Lie polynomial. Then $Im(f)$ on $UT^{(-)})$ is $J^r$, for some $0\leq r\leq n$.
\end{corollary}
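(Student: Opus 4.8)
The plan is to adapt the argument of \cite[Corollary 2.9]{FK} verbatim, substituting Theorem \ref{T2} for Theorem \ref{T1} at the single place where the cardinality hypothesis enters. First I would recall the structure of that argument: given a multilinear Lie polynomial $f(x_1,\ldots,x_m)$, one regards $f$ as an ordinary (associative) multilinear polynomial $\tilde f \in K\langle X\rangle$ by expanding every Lie bracket $[a,b]=ab-ba$. Then $\mathrm{Im}(f)$ evaluated on the Lie algebra $UT_n^{(-)}$ coincides with $\tilde f(UT_n)$, the image of the associative polynomial $\tilde f$ evaluated on the associative algebra $UT_n$, since the underlying sets of matrices plugged in are the same and the bracket operation is internal to $UT_n$. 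Hence describing $\mathrm{Im}(f)$ on $UT_n^{(-)}$ reduces to describing the image of a multilinear associative polynomial on $UT_n$.

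Next I would invoke Corollary \ref{CC} (equivalently Theorem \ref{T2} applied to the multilinear $\tilde f$): under the hypothesis $|K|>\frac{n(n-1)}{3}$, either $\tilde f$ is a polynomial identity of $UT_n$, in which case $\mathrm{Im}(f)=\{0\}=J^n$ (recall $J^n=UT_n^{(n-1)}=\{0\}$), or $\tilde f(UT_n)=UT_n^{(t)}=J^{t+1}$ for some integer $-1\leq t\leq \frac{m}{2}-1$. Setting $r=t+1$ we get $\mathrm{Im}(f)=J^r$ with $0\leq r\leq \frac{m}{2}\leq n$ (the bound $r\leq n$ being automatic, and in fact one should note that a Lie polynomial never lies outside $J$, so the case $t=-1$, $r=0$, i.e. $\mathrm{Im}(f)=UT_n$, does not actually occur for a genuine Lie polynomial — but including it does no harm to the stated conclusion). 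This yields exactly the claimed description.

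There is essentially one subtlety to check rather than a genuine obstacle: one must make sure that the passage from the Lie polynomial $f$ to the associative polynomial $\tilde f$ does not destroy the property ``nonzero'', and that the identification of images is exact. A multilinear Lie polynomial that is nonzero in the free Lie algebra remains nonzero in $K\langle X\rangle$ because the free Lie algebra embeds in $K\langle X\rangle$; and even if $\tilde f$ happened to be a polynomial identity of $UT_n$, the conclusion $\mathrm{Im}(f)=\{0\}=J^r$ with $r=n$ still holds. The identification $\mathrm{Im}_{UT_n^{(-)}}(f)=\tilde f(UT_n)$ is immediate from the definitions since evaluations range over the same set of matrices. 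So the argument is a clean citation-plus-substitution, and the only thing the improved cardinality bound $|K|>\frac{n(n-1)}{3}$ buys us is the right to apply Corollary \ref{CC} in place of Theorem \ref{T1}; everything else is unchanged, which is why omitting the detailed proof is justified.
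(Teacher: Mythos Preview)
Your proposal is correct and matches the paper's approach exactly: the paper itself omits the proof, merely stating that one applies Theorem~\ref{T2} (via Corollary~\ref{CC}) and repeats the argument of \cite[Corollary 2.9]{FK}, which is precisely the expand-the-bracket-and-identify-images argument you describe. One tiny aside: your parenthetical remark that a Lie polynomial never yields image outside $J$ is false in degree one (e.g.\ $f(x_1)=x_1$ gives $UT_n=J^0$), but as you note this does not affect the stated conclusion $0\le r\le n$.
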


Recently, Fagundes and Koshlukov \cite{FK} obtained the following result:

\begin{theorem}\cite[Theorem 4.4]{FK}
Let $F$ be a field with $|K|\geq\frac{n(n-1)}{2}$, let $UT_n =\bigoplus_{k\in Z_q}A_k$  be endowed with
the elementary $Z_q$-grading given by the sequence
\[
(\bar{0},\bar{1},\ldots,\overline{q-2},\overline{q-1},\ldots,\overline{q-1})
\]
and let $f\in F\langle X\rangle^{gr}$ be
a multilinear polynomial. Then $Im(f)$ on $UT_n$ is $\{0\}$, $J^r$, $B_{\bar{l},r}$, or $A_{\bar{l}}$, where $J=Jac(A_0)$. In particular, the image is always a homogeneous vector subspace.
\end{theorem}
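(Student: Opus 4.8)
The plan is to follow the scheme of \cite{FK}, replacing their use of \cite[Theorem 1.1]{Wang2022} by Theorem \ref{T1} (equivalently Corollary \ref{CC}) wherever an ungraded image on an upper triangular algebra is needed; the hypothesis $|K|\geq\frac{n(n-1)}{2}$ dominates every field-size requirement that arises, by Lemma \ref{L3.9}. Write $f=f\bigl(x_1^{(g_1)},\ldots,x_m^{(g_m)}\bigr)$ with each variable homogeneous of the indicated degree and put $\bar l=g_1+\cdots+g_m\in\mathbb{Z}_q$. Since $f$ is multilinear, each of its monomials is a product of the $m$ variables in some order, so $f$ is homogeneous of degree $\bar l$ and every graded evaluation lies in the single component $A_{\bar l}$; hence $Im(f)\subseteq A_{\bar l}$, and any description of $Im(f)$ as a subspace of $A_{\bar l}$ is automatically a homogeneous subspace. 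First I would record the relevant block structure: for the sequence $(\bar 0,\bar 1,\ldots,\overline{q-2},\overline{q-1},\ldots,\overline{q-1})$ one has $\deg e_{ij}=d_j-d_i$ (with $d_i$ the $i$-th term of the sequence), whence $A_0$ is the block-diagonal subalgebra consisting of arbitrary scalars on the coordinates $1,\ldots,q-1$ together with a full upper triangular block on the coordinates $q,\ldots,n$, i.e. $A_0\cong K^{q-1}\times UT_{n-q+1}$, with $J=\mathrm{Jac}(A_0)$ the strictly upper triangular part of that block; and for $\bar l\neq\bar 0$, writing $l$ for the representative of $\bar l$ in $\{1,\ldots,q-1\}$, the component $A_{\bar l}$ is spanned by the matrix units $e_{i,i+l}$ with $i+l\leq q-1$ together with the units $e_{q-l,j}$ with $q\leq j\leq n$. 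The spaces $B_{\bar l,r}$ are the nested subspaces of $A_{\bar l}$ got by discarding the matrix units of smallest distance $j-i$, exactly as $UT_n^{(t)}$ sits inside $UT_n$.

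Next I would split on $\bar l$. If $\bar l=\bar 0$ and every $g_i=\bar 0$, then graded evaluations of $f$ are exactly ordinary evaluations of the underlying multilinear polynomial $p$ on $A_0\cong K^{q-1}\times UT_{n-q+1}$, so by multilinearity and the product decomposition of the algebra $Im(f)=p(K)^{q-1}\times p(UT_{n-q+1})$; an argument as in Lemma \ref{L3.1} shows the first factor is $\{0\}$ or $K$, Corollary \ref{CC} (applicable since $(n-q+1)(n-q)/3\leq n(n-1)/2$) shows the second is $UT_{n-q+1}^{(t)}$, and the two factors are forced to be compatible --- $\mathrm{ord}(p)=0$ gives both factors full, $\mathrm{ord}(p)\geq 1$ gives first factor $\{0\}$ and drives the image into $J$ --- so that $Im(f)\in\{A_0,J,J^2,\ldots\}$, that is $J^r$ with $0\leq r\leq n$ (conventions $J^0=A_0$, $J^r=\{0\}$ for large $r$). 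If $\bar l=\bar 0$ but some $g_i\neq\bar 0$, the corresponding argument has zero diagonal and zero $(q,\ldots,n)$-corner, forcing $Im(f)\subseteq J$, and this situation is treated together with the case $\bar l\neq\bar 0$ below.

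For $\bar l\neq\bar 0$ I would run a graded analogue of the $\mathrm{ord}(p)$ argument behind Lemmas \ref{L3.6}--\ref{L3.7}: expanding a graded evaluation entrywise in the manner of Proposition \ref{P1}, pick the least graded order $r$ for which the pertinent coefficient polynomial is not identically zero on $K$; use Lemma \ref{L3.5} and Corollary \ref{CC1} to choose diagonal data making that coefficient simultaneously nonzero in every entry of $A_{\bar l}$ of the relevant distance; then solve, by back-substitution along an ordering of the remaining off-diagonal unknowns, the triangular linear system whose leading coefficients are the nonzero values just arranged. This produces $Im(f)\supseteq B_{\bar l,r}$, while Proposition \ref{P1} together with the vanishing of all lower-order coefficients yields the reverse inclusion; the degenerate choices of $r$ recover $Im(f)=A_{\bar l}$ and $Im(f)=\{0\}$. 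Combining with the previous paragraph gives the four listed possibilities, and homogeneity is automatic.

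The main obstacle is the case $\bar l\neq\bar 0$: unlike $A_0$, the component $A_{\bar l}$ is not a subalgebra, and its support is two interacting lines of matrix units --- one inside the top $(q-1)\times(q-1)$ block along the $l$-th superdiagonal, the other running east along row $q-l$ from column $q$ to column $n$ --- so pinning down the right notion of graded order, the right chain $B_{\bar l,1}\supset B_{\bar l,2}\supset\cdots$, and verifying that the linear system one writes down is genuinely triangular (which is what makes the modest field-size hypothesis enough to solve it) all demand careful combinatorial bookkeeping. The field bound is not the difficulty: every inequality invoked is dominated by $\frac{n(n-1)}{3}$ via Lemma \ref{L3.9}, hence by $\frac{n(n-1)}{2}$.
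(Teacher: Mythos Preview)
The paper does not prove this statement at all: it is quoted verbatim as \cite[Theorem 4.4]{FK} and serves only as context for the subsequent Corollary, whose proof the paper explicitly omits (``We omit its proof for brevity''). There is therefore no proof in the paper to compare your attempt against. What the paper actually contributes in this section is the observation that, once Theorem~\ref{T2} (and hence Corollary~\ref{CC}) is available, one can rerun the Fagundes--Koshlukov argument with the weaker hypothesis $|K|>\frac{n(n-1)}{3}$; that rerun is left entirely to \cite{FK}.

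Your sketch is a plausible outline of the Fagundes--Koshlukov strategy, and your structural description of $A_0$ and $A_{\bar l}$ for this particular grading is correct. Two remarks are in order. First, your opening sentence conflates Theorem~\ref{T1} with Corollary~\ref{CC}: Theorem~\ref{T1} \emph{is} \cite[Theorem~1.1]{Wang2022}, so ``replacing their use of \cite[Theorem~1.1]{Wang2022} by Theorem~\ref{T1}'' is vacuous; for the stated hypothesis $|K|\geq\frac{n(n-1)}{2}$ you simply use Theorem~\ref{T1} as Fagundes--Koshlukov do, while the genuine replacement (by Corollary~\ref{CC}) is what yields the paper's improved Corollary. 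Second, you candidly flag the $\bar l\neq\bar 0$ case as the ``main obstacle'' and do not carry it out---defining the chain $B_{\bar l,r}$, identifying the correct graded order, and checking triangularity of the resulting system are precisely the substance of \cite[Theorem~4.4]{FK}, so your proposal is a roadmap rather than a proof. Since the paper itself defers to \cite{FK} here, that is consistent with what is expected; but be aware that Lemmas~\ref{L3.5}--\ref{L3.7} as written treat the ungraded situation, and adapting them to the two-line support of $A_{\bar l}$ is not merely notational.
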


Applying Theorem \ref{T2} and using the  same arguments as in \cite[Theorem 4.4]{FK}, we can obtain the following result. We omit its proof for brevity.

\begin{corollary}
Let $F$ be a field with $|K|>\frac{n(n-1)}{3}$, let $UT_n =\bigoplus_{k\in Z_q}A_k$  be endowed with
the elementary $Z_q$-grading given by the sequence
\[
(\bar{0},\bar{1},\ldots,\overline{q-2},\overline{q-1},\ldots,\overline{q-1})
\]
and let $f\in F\langle X\rangle^{gr}$ be
a multilinear polynomial. Then $Im(f)$ on $UT_n$ is $\{0\}$, $J^r$, $B_{\bar{l},r}$, or $A_{\bar{l}}$, where $J=Jac(A_0)$. In particular, the image is always a homogeneous vector subspace.
\end{corollary}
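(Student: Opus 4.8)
The plan is to reuse the proof of \cite[Theorem 4.4]{FK} almost verbatim, the only change being to feed it the sharper ungraded input supplied by Theorem \ref{T2} (equivalently, by its multilinear specialization Corollary \ref{CC}) in place of Theorem \ref{T1}, and then to verify that the weaker cardinality hypothesis $|K|>\frac{n(n-1)}{3}$ still suffices to run every step.

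First I would recall the mechanics of the Fagundes--Koshlukov argument for the elementary $Z_q$-grading attached to the sequence $(\bar0,\bar1,\ldots,\overline{q-2},\overline{q-1},\ldots,\overline{q-1})$. Writing $\deg(i)$ for the degree of the $i$-th diagonal slot, the homogeneous component $A_{\bar l}$ is the span of the matrix units $E_{ij}$ with $i\le j$ and $\deg(j)-\deg(i)=\bar l$; thus $A_{\bar0}$ is a subalgebra isomorphic to the direct product of finitely many copies of $K$ and one ordinary upper triangular block $UT_k$ with $k=n-q+1\le n$ (coming from the repeated degree $\overline{q-1}$), and $J=\mathrm{Jac}(A_{\bar0})$ is the strictly upper part of that block. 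A multilinear graded polynomial $f$ has all its variables of fixed degrees, so evaluating it on homogeneous elements produces homogeneous values of one fixed degree $\bar l$, whence $\mathrm{Im}(f)\subseteq A_{\bar l}$. The core of \cite{FK} is then to reduce the determination of $\mathrm{Im}(f)$ inside $A_{\bar l}$ to the ungraded description of the image of an associated ordinary multilinear polynomial on the honest upper triangular block(s) $UT_k$ with $k\le n$; this reduction is exactly where \cite{FK} invokes Theorem \ref{T1}, and with it the bound $|K|\ge\frac{n(n-1)}{2}$.

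Next I would carry out the substitution. For every block $UT_k$ with $k\le n$ that occurs, Corollary \ref{CC} identifies the image of the relevant polynomial with $UT_k^{(t)}$, a power of the Jacobson radical of $UT_k$ and hence a vector space, provided $|K|>\frac{k(k-1)}{3}$; and since $k\le n$ forces $\frac{k(k-1)}{3}\le\frac{n(n-1)}{3}$, the hypothesis $|K|>\frac{n(n-1)}{3}$ secures this for every block simultaneously. Substituting these block images into the Fagundes--Koshlukov bookkeeping, and otherwise following their proof word for word, yields that $\mathrm{Im}(f)$ is one of $\{0\}$, $J^r$, $B_{\bar l,r}$, or $A_{\bar l}$, each of which, being assembled from the vector spaces $UT_k^{(t)}$ of the blocks together with full off-diagonal strips, is a homogeneous vector subspace. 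The same replacement of Theorem \ref{T1} by Corollary \ref{CC} in \cite[Corollary 2.9]{FK} gives the earlier corollary on multilinear Lie polynomials on $UT_n^{(-)}$.

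The one point that genuinely needs checking, and the main though minor obstacle, is that the cardinality hypothesis enters the proof of \cite[Theorem 4.4]{FK} \emph{only} through applications of Theorem \ref{T1} to ungraded upper triangular algebras of size at most $n$. Inspecting that proof, every appearance of $|K|\ge\frac{n(n-1)}{2}$ is precisely of this kind, and after replacing Theorem \ref{T1} by Corollary \ref{CC} the requirement on such a block of size $k$ relaxes to $|K|>\frac{k(k-1)}{3}\le\frac{n(n-1)}{3}$. Hence $|K|>\frac{n(n-1)}{3}$ is sufficient, no other step of the argument is affected, and the proof goes through with no further changes; we therefore omit the routine details.
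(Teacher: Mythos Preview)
Your proposal is correct and matches the paper's own treatment exactly: the paper simply states that the corollary follows by applying Theorem~\ref{T2} (via Corollary~\ref{CC}) in place of Theorem~\ref{T1} inside the proof of \cite[Theorem~4.4]{FK}, and omits the details. Your write-up is in fact more detailed than what the paper provides, but the strategy---substitute the sharper ungraded input and observe that the cardinality hypothesis enters only through the block applications of size $\le n$---is precisely the intended one.
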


\end{document}